\newtheorem*{theorem*}{Theorem}
\newtheorem*{proposition*}{Proposition}
\newtheorem{theorem}{Theorem}
\newtheorem{lemma}{Lemma}
\newtheorem{proposition}{Proposition}
\newtheorem{conjecture}{Conjecture}
\theoremstyle{remark}
\newtheorem{example}{Example}
\newtheorem{remark}{Remark}
\theoremstyle{proof}
\numberwithin{equation}{section}
\numberwithin{assumption}{section}
\newcommand{\Z}{\mathbb{Z}}
\newcommand{\C}{\mathbb{C}}
\newcommand{\N}{\mathbb{N}}
\newcommand{\F}{\mathbb{F}}
\newcommand{\h}{\mathcal{H}}
\begin{document}
	
	\title{On variants of Chowla's conjecture}
	
	\author{Krishnarjun Krishnamoorthy}
	\email[Krishnarjun Krishnamoorthy]{krishnarjunmaths@outlook.com}
	\address{Beijing Institute of Mathemtical Sciences and Applications (BIMSA), No. 544, Hefangkou Village, Huaibei Town, Huairou District, Beijing.}

	\dedicatory{To my parents}
	
	\keywords{Chowla's conjecture, Completely multiplicative functions, Shifted convolution sums, Spectrum}
	\subjclass[2020] {Primary : 11N37, 11P32, Secondary : 11N35, 11T06}

	\begin{abstract}
		We study the shifted convolution sums associated to completely multiplicative functions taking values in $\{\pm 1\}$ and give combinatorical proofs of two recent results in the direction of Chowla's conjecture. We also determine the corresponding "spectrum".
	\end{abstract}
	
	\maketitle
	
	\section{Introduction}\label{Section "Introduction"}
	
	\subsection{Completely multiplicative functions}
	A function $f : \N\to\C$ is called completely multiplicative if for every $m,n\in \N$, $f(mn) = f(m)f(n)$. It follows that completely multiplicative functions are completely described by their values at primes. Let $\lambda$ denote the Liouville function, defined to equal $-1$ at the primes and extended to natural numbers completely multiplicatively. The study of various averages related to the Liouville function is an important part of analytic number theory and is intimately tied with many outstanding conjectures such as the Riemann hypothesis etc. One such question regards the $d$-point correlations of the Liouville function, which is the focus of this paper. In particular we have the following conjecture of Chowla \cite{ChowlaConjecture}. 
	\begin{conjecture}[Chowla]\label{Conjecture "Chowla"}
		Let $\lambda$ denote the Liouville function and let $H = \{h_1, \ldots, h_d\}$ be a non-empty set of non-negative integers, then
		\begin{equation}\label{Equation "Chowla conjecture"}
			\lim_{x\to\infty}\frac{1}{x} \sum_{n\leqslant x} \lambda(n+h_1)\ldots\lambda(n+h_d) = 0.
		\end{equation}
	\end{conjecture}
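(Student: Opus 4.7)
The plan is to establish \eqref{Equation "Chowla conjecture"} by combining a short-interval mean value theorem for the Liouville function, in the spirit of Matomaki and Radziwill, with an entropy decrement argument analogous to Tao's treatment of the two-point logarithmic Chowla conjecture, now adapted to Ces\`aro averages and to general $d$. The argument splits into three stages.

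First, I would reduce the Ces\`aro $d$-point correlation to a short-interval correlation. Since the Matomaki--Radziwill theorem gives that $\lambda$ has mean $o(1)$ on almost all intervals $[y, y+H]$ once $H \to \infty$, the Ces\`aro sum on the left of \eqref{Equation "Chowla conjecture"} is within $o(1)$ of an expectation of the same $d$-point correlation averaged only over $n \in [y, y+H]$ for a random $y \leq x$. Second, I would exploit complete multiplicativity via the identity $\lambda(n+h_i) = \lambda(p)\lambda(p(n+h_i))$ for each prime $p$, averaging over primes in a dyadic window to realise the original correlation as a weighted average of correlations with dilated shifts $\{ph_1, \ldots, ph_d\}$, and close the step with an entropy decrement bound to show the two averages differ by $o(1)$. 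Third, I would combine the dilated correlation with the Fourier/Gowers uniformity of $\lambda$ on short intervals, known unconditionally for $d=2$ and extending via the nilsequence machinery of Tao and Ter\"av\"ainen for higher $d$, to conclude that both correlations tend to zero.

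The main obstacle, which I expect to consume the bulk of the paper, lies in the second stage. Tao's entropy decrement argument is tailored to the logarithmic weight $1/n$: that weight produces a clean dilation identity of the form
$$\sum_{n \leq x} \frac{\lambda(n)\lambda(n+h)}{n} \approx \sum_{p \leq P} \frac{1}{p} \sum_{n \leq x/p} \frac{\lambda(n)\lambda(n+h/p)}{n},$$
and it is precisely this clean averaging over $p$ of the shift $h$ that drives the entropy decrement to a usable conclusion. At Ces\`aro scale no analogous identity is available, and the entropy loss per dyadic scale does not telescope as favourably. Overcoming this requires either a new combinatorial identity compensating for the absent logarithmic weight, or an unconditional reduction from the Ces\`aro to the logarithmic statement of Chowla's conjecture; producing either would constitute the genuinely new input of the paper. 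The combinatorial treatment of partial results towards Chowla given in the sections that follow is intended to provide exactly the kind of elementary shifted-convolution identity that might seed such an argument, and the spectrum computation is intended to identify the obstruction classes that any such identity must neutralise.
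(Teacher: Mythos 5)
The statement you have been asked to prove is Conjecture \ref{Conjecture "Chowla"} itself, which the paper records as an open conjecture: it is proved nowhere in the paper, and the author states explicitly that the only known case is $d=1$ (equivalent to the prime number theorem). The paper's actual contributions are partial results in its direction --- Theorem \ref{Theorem "Main Theorem"} computes $\kappa_P^H$ when $P$ is a \emph{small} set of primes (the Liouville function corresponds to $P$ being the set of all primes, which is large, so this says nothing about $\lambda$ itself), and Theorem \ref{Theorem "Second Main Theorem"} shows only that the correlation cannot be trivially $\pm 1$. So there is no ``paper's own proof'' to compare against, and any purported proof of the full conjecture should be treated with the utmost scepticism.

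Your proposal is not a proof: it is a research programme with an acknowledged missing step at its core. In your second stage you concede that Tao's entropy decrement argument is tailored to the logarithmic weight $1/n$, that no analogous dilation identity exists at Ces\`aro scale, and that bridging this gap ``would constitute the genuinely new input.'' That is precisely the open part of the problem --- the unconditional passage from logarithmic to Ces\`aro averages (or an entropy decrement that telescopes without the logarithmic weight) is not known, and naming the obstacle is not the same as overcoming it. Your third stage has an independent gap: the Gowers uniformity of $\lambda$ in short intervals yields the \emph{logarithmic} Chowla conjecture for correlations of odd order, but the even-order cases with $d\geqslant 4$ remain open even in logarithmic form, so the nilsequence machinery you invoke does not close the argument for general $d$ even if stage two were granted. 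Finally, your hope that the combinatorial identities of this paper (Lemma \ref{Lemma "N_p recursion"} and its consequences) could seed such an argument is misplaced in a specific way: those identities control $\mathcal{N}_P^H$ only by summing the local densities $\eta_p^H \leqslant d/(p+1)$ over $p \in P$, as in \eqref{Equation "Chain"}, and this bound is vacuous exactly when $P$ is a large set --- which is the only case relevant to the Liouville function.
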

	There are many versions of this conjecture available in literature, a few replacing the Liouville function with the M\"obius function (see \cite{RamareChowla} for more information and the inter-dependencies of various versions of Conjecture \ref{Conjecture "Chowla"}). In all the above variants, the conjecture encodes the expectation that the prime decomposition of an integer $n$ should be independent of that of $n+h$ for any fixed $h \geqslant 1$ (or other suitable polynomial shifts). Thus the values of Liouville function at these integers maybe treated as ``independent events'' and hence their average should vanish in the long run (that is they should not be correlated). Extending this philosophy to $d$-shifts gives Conjecture \ref{Conjecture "Chowla"}.

	\subsection{Main results}
	
	In this paper we give combinatorical proofs of two recent results regarding the shifted convolution averages of completely multiplicative functions taking values in $\{\pm 1\}$, particularly along the lines of Conjecture \ref{Conjecture "Chowla"}. We begin with some notation.  Let $P$ denote a subset of primes. Associated to $P$, we define the completely multiplicative function $\lambda_P$ as
	\begin{equation}\label{Equation "lambda_P definition"}
		\lambda_P(p) := \begin{cases}
			-1 &\mbox{ if }p\in P\\
			1&\mbox{ otherwise.}
		\end{cases}
	\end{equation}
	Equivalently, if $\Omega_P(n)$ denotes the completely additive function defined on the primes as
	\begin{equation}\label{Equation "Omega_P definition"}
		\Omega_P(p) := \begin{cases}
			1 &\mbox{ if }p\in P\\
			0&\mbox{ otherwise.}
		\end{cases}
	\end{equation}
	then $\lambda_P(n) = (-1)^{\Omega_P(n)}$. Every completely multiplicative function taking values in $\{\pm 1\}$ is of the form $\lambda_P$ for some $P$. In fact this association to a completely multiplicative function taking values in $\{\pm 1\}$, the set of primes where it takes the value $-1$ is a group isomorphism where the subsets of primes are considered as a group under symmetric difference and the completely multiplicative functions are considered under pointwise multiplication.
	
	Recall that a subset of natural numbers is called a \textit{small set} if its sum of reciprocals converges and called a \textit{large set} otherwise. For $H=\{h_1,\ldots,h_d\}$ we define
	\begin{equation}\label{Equation "Lambda definition"}
		\Lambda_P^H(n):= \lambda_P(n + h_1)\ldots \lambda_P(n+h_d).
	\end{equation}
	Define $\hat{H}:=\{h_i-h_j\ |\ h_i,h_j\in H, h_i\neq h_j\}$. Call a prime \textit{exceptional} if it divides an element of $\hat{H}$ and \textit{non-exceptional} otherwise. We now have the first main theorem of this paper.
	
	\begin{theorem}\label{Theorem "Main Theorem"} 
		Let $H = \{h_1, \ldots, h_d\}$ be a fixed subset of $\N$. Let $P$ be a small set of primes. Then, for every prime $p$, there exists constants $\eta_p^H$ (depending on $H$) such that
		\begin{equation}\label{Equation "Chowla"}
			\kappa_P^H:=\lim_{x\to\infty}\frac{1}{x} \sum_{n\leqslant x} \Lambda_P^H(n) =   \prod_{p\in P} \left(1 - 2\eta_p^H\right).
		\end{equation}
		Moreover, for every non-exceptional prime, $\eta_p^H = \frac{d}{p+1}$. 
	\end{theorem}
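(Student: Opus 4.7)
The plan is to exploit the multiplicative factorisation
\begin{equation*}
\Lambda_P^H(n) \;=\; \prod_{p\in P}\Lambda_{\{p\}}^H(n),
\end{equation*}
which is immediate from $\Omega_P = \sum_{p\in P}\Omega_{\{p\}}$, and to show that $\kappa_P^H$ factors correspondingly as a product of ``local'' mean values. For each prime $p$, I would first establish existence of
\begin{equation*}
\eta_p^H \;:=\; \lim_{x\to\infty}\frac{1}{x}\#\Big\{n\leq x:\ \sum_{i=1}^d v_p(n+h_i) \text{ is odd}\Big\},
\end{equation*}
by partitioning $n$ modulo $p^K$ and sending $K\to\infty$ (the tail $\{\exists i : v_p(n+h_i)>K\}$ has density $\leq d/p^K$); this yields $\tfrac{1}{x}\sum_{n\leq x}\Lambda_{\{p\}}^H(n) \to 1-2\eta_p^H$. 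For a non-exceptional prime $p$, the shifts $h_1,\ldots,h_d$ are pairwise distinct modulo $p$, so at most one of the $n+h_i$ is divisible by $p$; the events $A_i = \{n: v_p(n+h_i)\text{ odd}\}$ are then pairwise disjoint, each of density $\sum_{k\geq 0}(p-1)/p^{2k+2} = 1/(p+1)$, and hence $\eta_p^H = d/(p+1)$. In addition, for every prime $p$ the crude bound $\eta_p^H\leq d/p$ holds, since $\Lambda_{\{p\}}^H(n) = -1$ forces some $n+h_i$ to be divisible by $p$.

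Next, for a finite set $P$, the factorisation of mean values follows from the Chinese Remainder Theorem. Capping valuations at level $K$ via $\Lambda_{\{p\}}^{H,(K)}(n) := (-1)^{\sum_i \min(v_p(n+h_i),K)}$ makes each truncated factor periodic modulo $p^{K+1}$, and since the moduli for distinct $p\in P$ are pairwise coprime, the average of the product over a complete residue system factors as the product of local averages. Sending $K\to\infty$, with per-factor truncation error $O(p^{-K})$, identifies the $x\to\infty$ limit with $\prod_{p\in P}(1-2\eta_p^H)$. For an infinite small $P$, I would then approximate by the finite truncation $P_N := \{p\in P : p\leq N\}$, writing
\begin{equation*}
\Lambda_P^H(n) - \Lambda_{P_N}^H(n) \;=\; \Lambda_{P_N}^H(n)\bigl(\Lambda_{P\setminus P_N}^H(n) - 1\bigr),
\end{equation*}
which is nonzero only if some prime in $P\setminus P_N$ divides some $n+h_i$; the density of such $n$ is at most $d\sum_{p\in P, p>N}1/p$, tending to zero by smallness of $P$. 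The same hypothesis, combined with $\eta_p^H\leq d/p$, guarantees absolute convergence of $\prod_{p\in P}(1-2\eta_p^H)$. A triangle inequality combining the finite-$P$ identity applied to $P_N$ with these two tail estimates then finishes the proof.

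The principal obstacle is the treatment of \emph{exceptional} primes: for such $p$, two or more of the $n+h_i$ may be simultaneously divisible by $p$, so the disjointness argument for the $A_i$ fails and one must instead perform a more intricate inclusion--exclusion modulo high powers of $p$ to compute $\eta_p^H$. Once this local density is established and shown to obey the bound $\eta_p^H\leq d/p$, the CRT factorisation for finite $P$ and the small-set tail reduction for infinite $P$ proceed as outlined.
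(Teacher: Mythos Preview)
Your proposal is correct and follows essentially the same route as the paper: factorise $\Lambda_P^H$ over primes, establish each local density $\eta_p^H$ by truncating $p$-adic valuations at level $K$ (the paper packages this as approximating sets $X_P^H(r)\subseteq\mathcal{N}_P^H\subseteq Y_P^H(r)$ in a ring $\mathcal{A}_P$ of arithmetic progressions), combine local factors for finite $P$ via CRT, and pass to infinite small $P$ by the tail bound $\sum_{p\in P\setminus P_N}\eta_p^H\ll\sum_{p>N,\,p\in P}1/p$. The one place the paper is more explicit is the exceptional primes: rather than a generic inclusion--exclusion, it gives a terminating recursion $H\mapsto H_1:=\tfrac{1}{p}(H-i_1)$ (when all $h_i$ share a residue $i_1\bmod p$) that reduces $\max H$ at each step and thereby both proves existence of $\eta_p^H$ and yields an algorithm to compute it; your truncation argument already suffices for existence, so this is a refinement rather than a gap.
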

	The above theorem can be deduced from the results in \cite{Klurman}, but our proofs are different and comparitively elementary. What is interesting however is that the right hand side of \eqref{Equation "Chowla"} resembles an Euler product even when the corresponding arithmetic function namely $\Lambda_P^H(n)$ fails to be even multiplicative. Moreover, it is not even clear that the associated Dirichlet series (namely $\sum_{n=1}^{\infty} \Lambda_P^H(n) n^{-s}$) continues beyond the half plane $\Re(s) > 1$; a property much needed for the application of Tauberian arguments. The multiplicative nature of the right hand side of \eqref{Equation "Chowla"} in turn, conforms with the principle that the ``global'' density should be a product of the ``local'' densities.
	
	Theorem \ref{Theorem "Main Theorem"} still leaves open the question of large subsets of primes. In this case, we expect the limit on the left hand side of \eqref{Equation "Chowla"} to vanish, thus making Theorem \ref{Theorem "Main Theorem"} true in this case as well (if we interpret the infinite product on the right of \eqref{Equation "Chowla"} as zero, similar to the infinite product on the right hand side of \eqref{Equation "WW"} below). In particular, this would imply Conjecture \ref{Conjecture "Chowla"}. While such a generalization of Theorem \ref{Theorem "Main Theorem"} is out of reach, we are however able to prove the following.
	\begin{theorem}\label{Theorem "Second Main Theorem"}
		If there exists $\phi\neq H\subset \N$ such that $|\kappa_P^H|=1$, then $P=\phi$. More concretely, if $\lambda_P: \N\to \{\pm 1\}$ is a non constant completely multiplicative function, then for any $H = \{h_1,\ldots, h_d\}$,
		\begin{equation}\label{Equation "Liminf"}
			\liminf_{x\to\infty} \frac{1}{x} \sum_{n\leqslant x} \lambda_P(n+h_1)\ldots \lambda_P(n+h_d) < 1,
		\end{equation}
		and
		\begin{equation}\label{Equation "Limsup"}
			\limsup_{x\to\infty} \frac{1}{x} \sum_{n\leqslant x} \lambda_P(n+h_1)\ldots \lambda_P(n+h_d) > -1
		\end{equation}
	\end{theorem}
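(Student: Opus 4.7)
Our strategy is a contradiction argument. Suppose $\liminf_{x\to\infty} \tfrac{1}{x}\sum_{n \leqslant x} \Lambda_P^H(n) = 1$; the case $\limsup = -1$ is handled symmetrically. Since $\Lambda_P^H(n) \in \{\pm 1\}$, the liminf condition forces the limit to exist and equal $1$, i.e., $\Lambda_P^H(n) = 1$ outside a set of density zero. Fix any $p \in P$ (possible since $\lambda_P$ is non-constant) and use complete multiplicativity of $\lambda_P$ to factor
\[
\Lambda_P^H(n) \;=\; \Lambda_{\{p\}}^H(n) \cdot \Lambda_{P\setminus\{p\}}^H(n).
\]
The density-$1$ identity then forces $\Lambda_{\{p\}}^H(n) = \Lambda_{P \setminus \{p\}}^H(n)$ on a set of density $1$.

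The factor $\Lambda_{\{p\}}^H(n) = (-1)^{\sum_{i=1}^d v_p(n+h_i)}$ depends only on $n$'s $p$-adic behaviour. Theorem \ref{Theorem "Main Theorem"} applied to the singleton $\{p\}$ (which is small) yields $\lim_x \tfrac{1}{x}\sum_{n \leqslant x} \Lambda_{\{p\}}^H(n) = 1 - 2\eta_p^H$. A short auxiliary lemma, proved by constructing explicit residues modulo sufficiently high powers of $p$, asserts that $\eta_p^H \in (0,1)$ for every prime $p$ and every non-empty $H$: for non-exceptional $p$ this is immediate from the formula $\eta_p^H = d/(p+1)$, while for the finitely many exceptional primes (divisors of elements of $\hat H$) one constructs $n$ realising each parity of $\sum_{i} v_p(n+h_i)$ explicitly. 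In particular $|1 - 2\eta_p^H| < 1$, and $\Lambda_{\{p\}}^H$ takes each of the values $\pm 1$ on a positive-density union of residue classes modulo large powers of $p$.

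The crux is a Chinese Remainder Theorem-based independence statement: since $\Lambda_{P \setminus \{p\}}^H(n)$ depends only on the prime factorisations of the shifts $n + h_i$ through primes in $P \setminus \{p\}$, its limiting density on the arithmetic progression $\{n \equiv a \pmod{p^K}\}$ coincides with its density on all of $\N$. Combining this equidistribution with the density-$1$ identity $\Lambda_{\{p\}}^H = \Lambda_{P\setminus\{p\}}^H$ forces, for each generic residue $a$ modulo $p^K$, the density of $\{n \equiv a \pmod{p^K} : \Lambda_{P\setminus\{p\}}^H(n) = +1\}$ to be $1$ if $\Lambda_{\{p\}}^H(a) = +1$ and $0$ if $\Lambda_{\{p\}}^H(a) = -1$. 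Equidistribution then demands that the sign $\Lambda_{\{p\}}^H(a)$ be the same for density-$1$ of the residues $a$, contradicting the conclusion $\eta_p^H \in (0,1)$ of the previous paragraph.

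The main obstacle is making the CRT-equidistribution of $\Lambda_{P\setminus\{p\}}^H$ on arithmetic progressions modulo $p^K$ fully rigorous when $P$ is infinite---especially in the ``large $P$'' regime left open by Theorem \ref{Theorem "Main Theorem"}. For every finite truncation $Q' \subset P \setminus \{p\}$ the equidistribution of $\Lambda_{Q'}^H$ is a routine CRT count, but passing to the infinite limit requires controlling the tail $(P \setminus \{p\}) \setminus Q'$; this is where the real combinatorial work of the argument has to reside, either by an inductive ``one prime at a time'' bootstrap starting from the $d = 1$ (Liouville-type) base case or by an $L^2$ correlation bound that survives the possible divergence of $\sum_{q \in P} 1/q$.
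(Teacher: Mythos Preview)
Your proposal is not a complete proof: you correctly identify its own gap. The step you label ``CRT-equidistribution''---that the (upper/lower) density of $\{n:\Lambda_{P\setminus\{p\}}^H(n)=+1\}$ is the same on every residue class modulo $p^K$---is precisely the hard part, and neither of the two routes you suggest at the end actually closes it. For a finite truncation $Q'\subset P\setminus\{p\}$ the CRT count is indeed routine, but the passage to infinite $P\setminus\{p\}$ is not a matter of ``controlling the tail'': when $P$ is a large set there is no reason the tail contribution is small, and in fact your equidistribution statement for, say, $P=\{\text{all primes}\}$ and $|H|=2$ is a form of Chowla's conjecture in arithmetic progressions, which is open. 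The ``bootstrap from $d=1$'' idea does not help either, since knowing the $1$-point averages (prime number theorem in progressions) tells you nothing about the $2$-point correlations you would need at the next step. In short, the argument as written assumes something of comparable depth to the theorem itself.

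The paper's proof takes a completely different route and avoids this obstacle by \emph{reducing} to the two-point case rather than trying to handle general $P$ directly. One first shows (an easy density argument) that the family $\mathcal H_P=\{H:|\kappa_P^H|=1\}$ is closed under symmetric difference; it is obviously closed under translation. A purely combinatorial lemma over $\F_2[t]$ (due to Wildon) then shows that any such family containing a non-empty set must contain a two-element set $\{a,b\}$. At that point one invokes the Matom\"aki--Radziwi\l\l\ theorem, which gives exactly the $|H|=2$ case of the statement for every non-empty $P$. Thus the deep analytic input is isolated in a single known result, and no equidistribution of higher correlations in progressions is needed. The paper itself remarks that the argument genuinely requires the two-point Matom\"aki--Radziwi\l\l\ input and cannot be run from the one-point (Wintner--Wirsing) case alone; your attempt confirms this from a different angle.

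A secondary issue: your claim that $\eta_p^H\in(0,1)$ for \emph{every} prime $p$ and non-empty $H$ is plausible but you have not proved it for exceptional primes; ``constructing explicit residues'' needs an actual argument when several $h_i$ collide modulo $p$. This is, however, minor compared with the main gap above.
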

	Theorem \ref{Theorem "Second Main Theorem"} follows from a recent work of Teravainen \cite{JoniAMJ}, which is much more general. As mentioned before the purpose of the present article is to give an elementary combinatorical proof. 
	
	Given a family of multiplicative functions taking values inside the unit circle, their ``spectrum'' was studied in \cite{SoundSpectrum}. Analogously, we define the spectrum as $\Gamma_H$ to be the topological closure of the set of all values $\kappa_P^H$ as $P$ runs through all small subsets of primes for a fixed $H$. The precise evaluation of $\eta_p^H$ below allows us to describe the spectrum $\Gamma_H$ below in \S\ref{Section "Spectrum"}.
	
	\subsection{Brief overview of the proof}
	
	We give a brief overview of the proof of Theorems \ref{Theorem "Main Theorem"} and Theorem \ref{Theorem "Second Main Theorem"} leaving the details for the following sections. Define $	\mathcal{N}_P^H := \left\{ n\in\N\ |\ \Lambda_P^H(n) = -1 \right\}$	and set $\eta_P^H = \delta\left(\mathcal{N}_{P}^H\right)$ (we shall show later on that in the cases that interest us, this natural density exists).  The key idea in the proofs of Theorems \ref{Theorem "Main Theorem"} and \ref{Theorem "Second Main Theorem"} is the description of some structure in certain interesting families of subsets of $\N$.
	
	For Theorem \ref{Theorem "Main Theorem"}, the key ingredient is Lemma \ref{Lemma "N_p recursion"}, which describes how $\mathcal{N}_P^H$ behaves if we change $P$ and $H$. This allows us to prove Theorem \ref{Theorem "Main Theorem"} for finite sets of primes and to let the set of primes grow ``adding one prime at a time''. This leads to a limiting process which converges only if the original set $P$ is a small set. The relevant inequality is \eqref{Equation "Chain"}. As a consequence of the proof we describe how to calculate $\eta_P^H$ (see Example \ref{Example}). From \cite{Klurman}, $\eta_p^H$ is connected to the number of roots of the polynomial $(x+h_1)\ldots(x+h_d)$ modulo powers of $p$. We provide a slightly different description.
	
	The proof of Theorem \ref{Theorem "Second Main Theorem"} involves a combinatorical argument which is rather independent in itself. The idea is to show that the collection of counterexamples satisfy some additional structure and symmetries (see Lemma \ref{Lemma "Second main theorem 1"}). Using this we may reduce the the proof to the two-point correlation case, which is settled due to the work of Matomaki and Radziwill. The main arithmetic input, along with some handy lemmas, is Theorem \ref{Theorem "MatoRadz"} below (see also Remark \ref{Remark "2 set"}).
	
	\subsection{Known results}
	
	Before we proceed with the proof, we mention some important results available in literature. The only case where Conjecture \ref{Conjecture "Chowla"} is known is when $d=1$, in which case the statement is classically equivalent to the prime number theorem. Using the fact that $\lambda_P$ is completely multiplicative, we may derive (for $\Re(s) > 1$)
	\[
	\sum_{n=1}^{\infty} \frac{\lambda_P(n)}{n^s} = \zeta(s) \prod_{p\in P} \left(\frac{1-p^{-s}}{1+p^{-s}}\right)
	\]
	where $\zeta(s)$ is the Riemann zeta function. Combining the works of Wintner \cite{Wintner1} and Wirsing \cite{Wrising} leads to the following theorem (see also \cite[Theorem 2]{BorweinTransactions}).
	\begin{theorem}[Wintner-Wirsing]\label{Theorem "Wintner-Wirsing"}
		For any subset $P$ of the primes, we have
		\begin{equation}\label{Equation "WW"}
			\lim_{x\to\infty} \frac{1}{x} \sum_{n\leqslant x} \lambda_P(n) = \prod_{p\in P} \left(1-\frac{2}{p+1}\right).
		\end{equation}
	\end{theorem}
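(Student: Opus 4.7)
The plan is to split the proof into two cases depending on whether $P$ is small or large, matching the dichotomy that the Euler product $\prod_{p \in P}(1 - 2/(p+1))$ is nonzero precisely when $\sum_{p \in P} 1/p < \infty$. The starting point in both cases is obtained by dividing the displayed Dirichlet series identity by $\zeta(s)$: this gives the convolution decomposition $\lambda_P = \mathbf{1} * g_P$, where $g_P$ is the multiplicative function determined by $g_P(p^k) = 2(-1)^k$ for $p \in P$ and $k \geq 1$, and $g_P(p^k) = 0$ for $p \notin P$ and $k \geq 1$. This decomposition is the engine for the small-$P$ case.

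When $P$ is small, I would first verify that
\[
\sum_{n=1}^{\infty} \frac{|g_P(n)|}{n} \;=\; \prod_{p \in P}\left(1 + \frac{2}{p-1}\right) \;=\; \prod_{p \in P}\frac{p+1}{p-1} \;<\; \infty,
\]
convergence being equivalent to smallness of $P$. With absolute convergence secured, I would run the standard Wintner argument: write $\sum_{n \leq x}\lambda_P(n) = \sum_{d \leq x} g_P(d)\lfloor x/d\rfloor$, truncate the outer sum at a parameter $N$, use $\lfloor x/d\rfloor = x/d + O(1)$ on the head $d \leq N$, and bound the tail by $x\sum_{d > N} |g_P(d)|/d$. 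Letting first $x \to \infty$ and then $N \to \infty$ identifies the mean value with $\sum_{n} g_P(n)/n = \prod_{p \in P}(p-1)/(p+1)$, which is exactly the right hand side of \eqref{Equation "WW"}.

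The main obstacle is the large-$P$ case. Here the series $G(s) = \prod_{p \in P}(1-p^{-s})/(1+p^{-s})$ no longer converges absolutely near $s = 1$, the convolution bookkeeping collapses, and the right hand side of \eqref{Equation "WW"} must be read as zero. I would invoke Wirsing's theorem for real multiplicative functions bounded by $1$: if $\sum_p (1-f(p))/p = \infty$ then the mean value of $f$ vanishes. Applied to $f = \lambda_P$, the hypothesis reads $2\sum_{p \in P}1/p = \infty$, which is exactly the definition of $P$ being large, so the mean value is $0$, matching the (vanishing) Euler product. Wirsing's theorem is the genuinely analytic input and is proved by what is now called a pretentious / Hal\'asz-type argument; as an alternative in the small-$P$ case one can instead apply a Wiener--Ikehara Tauberian theorem to $L(s,\lambda_P)$, exploiting the holomorphic continuation of $G(s)$ past $\Re(s)=1$ in that regime.
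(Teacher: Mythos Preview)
The paper does not give its own proof of this theorem; it is quoted as a known result, with the sentence ``Combining the works of Wintner \cite{Wintner1} and Wirsing \cite{Wrising} leads to the following theorem (see also \cite[Theorem 2]{BorweinTransactions}).'' Your proposal is exactly a fleshed-out version of what that attribution promises: the small-$P$ case is Wintner's classical mean-value theorem applied to the convolution $\lambda_P = \mathbf{1} * g_P$ (with $g_P$ supported on $\mathcal{S}_P$ and $\sum_n |g_P(n)|/n < \infty$), and the large-$P$ case is Wirsing's 1967 theorem that every real multiplicative function with $|f|\leqslant 1$ possesses a mean value, which vanishes when $\sum_p (1-f(p))/p$ diverges. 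The computations you give for $g_P(p^k)$ and for the resulting Euler products are correct, and the standard truncation argument you outline is precisely Wintner's.

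One small remark: describing Wirsing's proof as ``a pretentious / Hal\'asz-type argument'' is historically backwards---Wirsing's elementary proof (via an integral recursion) predates Hal\'asz, who then generalized it to complex-valued functions; the pretentious language came much later. This is a matter of description, not a gap in the mathematics. Your sketch is correct and aligns with what the paper cites rather than proves.
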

	An important aspect of the above theorem is the existence of the limit on the left hand side of \eqref{Equation "WW"}.
	
	Apart from the case of $d=1$, it was not even known whether the limit in \eqref{Equation "Chowla conjecture"} exists. Even if it did exist, it was unknown whether there is sufficient cancellations to ensure that the limit is better than the trivial limit (that is $\pm 1$). This second question was settled only recently, due to the important work of Matom\"aki and Radziwi\l\l\ \cite{MatoRadz}, where they studied the relation between ``short'' and ``long'' averages of arithmetic functions. We state, as a theorem, a corollary of the main result of \cite{MatoRadz}.
	
	\begin{theorem}[Matom\"aki - Radziwi\l\l]\label{Theorem "MatoRadz"}
		Let $h\geqslant 1$ be an integer, then there exists a positive constant $\delta(P,h)$ depending on $P$ and $h$ such that
		\[
		\frac{1}{x} \left|\sum_{n\leqslant x} \lambda_P(n)\lambda_P(n+h)\right| \leqslant 1-\delta(P,h)
		\]
		for all large enough $x > 1$ and for all $P$ non-empty. In other words, 
		\begin{equation}\label{Equation "MatoRadz"}
			\limsup_{x\to\infty} \frac{1}{x} \left|\sum_{n\leqslant x} \lambda_P(n)\lambda_P(n+h)\right| < 1
		\end{equation}
		for any $h\geqslant 1$, and $P\neq \phi$.
	\end{theorem}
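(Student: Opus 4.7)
The plan is to deduce the bound from the short-interval theorem of Matom\"aki--Radziwi\l\l\ \cite{MatoRadz} together with an elementary doubling trick. I argue by contradiction: suppose there exist a nonempty $P$ and an $h \geqslant 1$ and a subsequence $X_k \to \infty$ along which $\frac{1}{X_k}\bigl|\sum_{n\leqslant X_k}\lambda_P(n)\lambda_P(n+h)\bigr| \to 1$. Then for some $\epsilon \in \{\pm 1\}$, $\lambda_P(n)\lambda_P(n+h) = \epsilon$ for a $(1-o(1))$-proportion of $n \leqslant X_k$. Restricting $n$ to multiples of $h$ and using complete multiplicativity $\lambda_P(hk)\lambda_P(h(k+1)) = \lambda_P(k)\lambda_P(k+1)$ reduces the problem, at the cost of an $O(h)$ loss in the effective error, to the case $h=1$, which I henceforth assume.

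For the case $\epsilon = -1$, a doubling argument suffices. Writing $\eta$ for the fraction of $n \leqslant X$ with $\lambda_P(n)\lambda_P(n+1) \neq -1$, concatenation of adjacent pairs gives $\lambda_P(n)\lambda_P(n+2) = +1$ on at least a $(1-2\eta)$-fraction of $n$. Restricting further to even $n = 2k$ and applying $\lambda_P(2k)\lambda_P(2k+2) = \lambda_P(k)\lambda_P(k+1)$ yields $\lambda_P(k)\lambda_P(k+1) = +1$ on at least a $(1-4\eta)$-fraction of $k \leqslant X/2$, whereas the hypothesis forces $\lambda_P(k)\lambda_P(k+1) = -1$ on at least a $(1-2\eta)$-fraction of $k \leqslant X/2$. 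Since the two cases are mutually exclusive, $(1-4\eta) + (1-2\eta) \leqslant 1$, forcing $\eta \geqslant \tfrac{1}{6}$. This contradicts $\eta \to 0$ and in fact shows unconditionally that $\liminf_X \frac{1}{X}\sum_{n\leqslant X}\lambda_P(n)\lambda_P(n+1) \geqslant -\tfrac{2}{3}$.

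For the case $\epsilon = +1$, Matom\"aki--Radziwi\l\l\ is invoked. Assume $\lambda_P(n) = \lambda_P(n+1)$ for all but $\eta_k X_k$ values of $n \leqslant X_k$ with $\eta_k \to 0$, and choose $H_k \to \infty$ slowly enough that $\eta_k H_k \to 0$, say $H_k = \min(\eta_k^{-1/2}, \log X_k)$. Then $\lambda_P$ is constant on $[y, y+H_k]$ for all but an $O(\eta_k H_k)$-fraction of $y \in [X_k, 2X_k]$, so on such $y$ the short-interval average of $\lambda_P$ equals $\lambda_P(y) \in \{\pm 1\}$. By Theorem~\ref{Theorem "Wintner-Wirsing"} the long average converges to $M := \prod_{p\in P}(1 - 2/(p+1)) \in [0,1)$, with $M < 1$ strictly since $P$ is nonempty. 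The Matom\"aki--Radziwi\l\l\ theorem, applied to the multiplicative function $\lambda_P$, says that the short and long averages agree up to $o(1)$ for all but an $o(1)$-fraction of $y \in [X_k, 2X_k]$. On the intersection of the two good sets, $\lambda_P(y) = M + o(1)$, contradicting $\lambda_P(y) \in \{\pm 1\}$ together with $|M| < 1$.

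The main obstacle is the calibration of $H_k$ in the $\epsilon = +1$ step: one needs $H_k \to \infty$ for Matom\"aki--Radziwi\l\l\ to apply, yet $\eta_k H_k \to 0$ so that most short intervals carry no sign-change of $\lambda_P$. The hypothesis $\eta_k \to 0$ along the contradicting subsequence allows both, but converting the resulting qualitative contradiction into the quantitative bound $\delta(P, h) > 0$ stated in the theorem requires running the argument contrapositively and tracking how the constants depend on $M = M(P)$ and (through the initial reduction) on the shift $h$.
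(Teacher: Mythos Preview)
The paper does not give its own proof of Theorem~\ref{Theorem "MatoRadz"}: it is stated in \S\ref{Section "Introduction"} under ``Known results'' as a corollary of the main theorem of \cite{MatoRadz}, and is then used as a black box in the proof of Theorem~\ref{Theorem "Second Main Theorem"}. There is therefore nothing in the paper to compare your proposal against.

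That said, your sketch is a correct and standard way to extract the stated corollary from the Matom\"aki--Radziwi\l\l\ short-interval theorem. The reduction from general $h$ to $h=1$ via $\lambda_P(hk)\lambda_P(h(k+1))=\lambda_P(k)\lambda_P(k+1)$ is sound. The doubling argument for the case $\epsilon=-1$ is self-contained and yields the explicit bound $\liminf \geqslant -2/3$ with no appeal to \cite{MatoRadz} at all, which is a pleasant bonus. For $\epsilon=+1$ you invoke the intended mechanism: near-constancy of $\lambda_P$ on most short intervals of length $H_k$ forces those short averages to lie in $\{\pm 1\}$, while \cite{MatoRadz} forces almost all of them to agree with the long average $M=\prod_{p\in P}(1-2/(p+1))\in[0,1)$, which is strictly below $1$ by Theorem~\ref{Theorem "Wintner-Wirsing"} since $P\neq\phi$. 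One small technical point: your contradicting subsequence controls $n\leqslant X_k$, so the Matom\"aki--Radziwi\l\l\ comparison should be run on the dyadic range $[X_k/2,X_k]$ rather than $[X_k,2X_k]$; nothing else changes. Your closing remark about calibrating $H_k$ is accurate---the argument as written yields the qualitative statement~\eqref{Equation "MatoRadz"}, and extracting an explicit $\delta(P,h)$ would require tracking the rate in \cite{MatoRadz} and the loss from the reduction to $h=1$.
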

	
	Most of the recent progress, particularly in the last decade or so regarding the Chowla's conjecture (or more generally the Hardy-Littlewood-Chowla conjectures) have been from the perspective of ergodic theory. Following \cite{MatoRadz}, there has been some progress towards Conjecture \ref{Conjecture "Chowla"} particularly focusing on averaged \cite{MatoRadzTao1, MatoRadzTao2} and logarithmic \cite{TaoForum, TaoTeravainenDuke} versions. For conditional results assuming the existence of Siegel zeros, see \cite{TaoTeravainenJLMS}. For more information\footnote{The author wishes to thank Stelios Sachpazis for directing him to these references.}, see also \cite{chinis2021siegelzerossarnaksconjecture,jaskari2024chowlaconjecturelandausiegelzeroes,TaoTeraBordeux, Lith1, Lith2,pilatte2023improvedboundstwopointlogarithmic, helfgott2021expansiondivisibilityparity}.

	\subsection*{Notation}
	
	\begin{enumerate}
		\item 	Let $\N = \{1,2,\ldots\}$ denote the natural numbers and $\N_0$ denote the non-negative integers $\N\cup\{0\}$.
		\item	The symbol $\delta$ will denote natural density (whenever it exists), $\delta^+$ and $\delta^-$ will denote the upper and lower natural densities respectively.
		\item 	We shall denote the empty set as $\phi$. The empty sum is defined to be zero and the empty product is defined to be one.
		\item 	Given any two subsets $A,B\subseteq \N_0$, we define their ``sum'' as $A+B := \{a+b\ |\ a\in A, b\in B\}$, and their ``product'' as $A\cdot B:= \{ab\ |\ a\in A, b\in B\}$. In particular, if $A=\{a\}$, we also write $a+B$ and $aB$ in place of $A+B$ and $A\cdot B$ respectively.
		\item 	The symbol $P$ will usually denote subsets of prime numbers unless otherwise stated. The symbol $H$ will usually refer to a finite subset of $\N_0$.
	\end{enumerate}
	
	\section{Groundwork}\label{Section "Groundwork"}
	
	Given a sequence $\{a_n\}\subset (-1,1)$, we define the infinite product $\prod_{n=1}^{\infty}(1-a_n)$ as $\lim_{T\to\infty} \prod_{n=1}^{T}(1-a_n)$. The product is considered to be convergent if and only if $\sum a_n$ is convergent, in which case the limit above exists. If $\sum a_n$ diverges to $\infty$, then the product is defined to be $0$. Given any two sets $A,B$,  their symmetric difference $A\triangle B$ is defined as $(A\setminus B)\cup (B\setminus A)$. In particular, $A\triangle B$ contains elements of $A\cup B$ belonging to exactly one of $A$ or $B$. Furthermore, by induction, the symmetric difference of a finite number of sets $\{A_i\}$ is precisely the collection of those elements belonging to exactly an odd number of the $A_i$'s. 	Suppose that $ P $ is a non-empty subset of primes, we let $ \mathcal{S}_P $ denote the set of $ P $ smooth integers. That is
	\[
	\mathcal{S}_P := \{n\in \N\ |\ \mbox{ all the prime factors of }n\mbox{ are in }P\}.
	\]
	It follows from our convention that $1\in \mathcal{S}_P$ for every $P$ and $\mathcal{S}_\phi = \{1\}$.
	\begin{lemma}\label{Lemma "Key lemma"}
		For any two subsets $P_1, P_2$ of the primes, and for any $n\in \N$,
		\[
		\lambda_{P_1\triangle P_2}(n) = \lambda_{P_1}(n)\lambda_{P_2}(n).
		\]
	\end{lemma}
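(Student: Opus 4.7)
The plan is to use the exponential description $\lambda_P(n) = (-1)^{\Omega_P(n)}$ given in \eqref{Equation "Omega_P definition"} and translate the multiplicative identity into an additive congruence modulo $2$. Since
$$\lambda_{P_1}(n)\lambda_{P_2}(n) = (-1)^{\Omega_{P_1}(n)+\Omega_{P_2}(n)},$$
the lemma reduces to proving
$$\Omega_{P_1\triangle P_2}(n) \equiv \Omega_{P_1}(n) + \Omega_{P_2}(n) \pmod{2} \qquad \text{for every } n\in\N.$$

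Because all three functions appearing are completely additive, it suffices to check this congruence on primes. I would do this by a short four-way case analysis on the location of a prime $p$: if $p\in P_1\cap P_2$, then $p\notin P_1\triangle P_2$, so the left-hand side is $0$ while the right is $1+1\equiv 0$; if $p$ lies in exactly one of $P_1,P_2$, then $p\in P_1\triangle P_2$ and both sides equal $1$; if $p$ lies in neither, both sides are $0$. Extending by complete additivity to a general $n = \prod_p p^{a_p}$ gives
$$\Omega_{P_1}(n)+\Omega_{P_2}(n) - \Omega_{P_1\triangle P_2}(n) = 2\sum_{p\in P_1\cap P_2} a_p,$$
which is even, as required. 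Exponentiating $-1$ then yields the claimed identity for every $n\in\N$.

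There is no substantive obstacle: the content of the lemma is exactly the statement, already flagged just after \eqref{Equation "Omega_P definition"}, that the assignment $P\mapsto \lambda_P$ is a group isomorphism from $(2^{\text{primes}},\triangle)$ to the group of $\{\pm 1\}$-valued completely multiplicative functions under pointwise multiplication. The only care needed is the bookkeeping of the case analysis at primes; everything else is complete additivity.
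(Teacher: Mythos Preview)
Your proof is correct. The route differs from the paper's: the paper argues multiplicatively, first handling the disjoint case by factoring $n=n_1n_2n_3$ with $n_i$ the largest divisor of $n$ in $\mathcal{S}_{P_i}$ and reducing \eqref{Equation "Key Lemma 1"} to the complete multiplicativity of the full Liouville function $\lambda$, and then deducing the general case by writing $P_1 = (P_1\setminus P_2)\cup(P_1\cap P_2)$ and $P_2 = (P_2\setminus P_1)\cup(P_1\cap P_2)$ and cancelling the square $\lambda_{P_1\cap P_2}^2$. You instead pass immediately to the additive avatar $\Omega_P$ and verify a mod~$2$ congruence on primes, which is shorter and avoids the intermediate smooth-part factorisation entirely. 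The paper's approach has the minor conceptual payoff of exhibiting the identity as an instance of $\lambda$'s multiplicativity; yours makes more directly visible the group-homomorphism remark following \eqref{Equation "Omega_P definition"}, since your displayed equation $\Omega_{P_1}(n)+\Omega_{P_2}(n)-\Omega_{P_1\triangle P_2}(n)=2\sum_{p\in P_1\cap P_2}a_p$ is exactly the cocycle computation showing $P\mapsto\lambda_P$ respects the group law.
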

	
	\begin{proof}
		Suppose first that $P_1$ and $P_2$ are disjoint. Factorize $ n $ as $ n = n_1n_2n_3 $ where $ n_1$ is the largest divisor of $n$ in  $\mathcal{S}_{P_1}$, $n_2$ that in $\mathcal{S}_{P_2} $ and $ n_3 $ defined as $ n/(n_1n_2) $. It follows that $ \lambda_{P_1}(n) = \lambda_{P_1} (n_1), \lambda_{P_2}(n) = \lambda_{P_2}(n_2) $ and $ \lambda_{P_1\cup P_2}(n) = \lambda_{P_1\cup P_2}(n_1n_2) $. Thus it suffices to show that 
		\begin{equation}\label{Equation "Key Lemma 1"}
			\lambda_{P_1}(n_1)\lambda_{P_2}(n_2) = \lambda_{P_1\cup P_2}(n_1n_2).
		\end{equation}
		But observe that $ \lambda_{P_1}(n_1) = \lambda(n_1) $ and likewise for the other two terms. Thus the lemma follows from the multiplicativity of the Liouville function.
				
		For the general case write $ n $ as the product $ n = n_1n_2n_3n_4 $ where $ n_1, n_2,n_3 $ are respectively the largest divisors of $n$ in $ \mathcal{S}_{P_1\setminus P_2}, \mathcal{S}_{P_1\cap P_2}, \mathcal{S}_{P_2\setminus P_1} $, and $ n_4 $ is defined as $ n/(n_1n_2n_3) $. Then on repeatedly applying \eqref{Equation "Key Lemma 1"}, we have
		\[
			\lambda_{P_1}(n)\lambda_{P_2}(n) = \lambda_{P_1\setminus P_2}(n) \lambda^2_{P_1\cap P_2}(n) \lambda_{P_2\setminus P_1}(n)
		= \lambda_{P_1\setminus P_2}(n) \lambda_{P_2\setminus P_1}(n)
		= \lambda_{P_1\triangle P_2}(n).
		\]
	\end{proof}
	
	From our conventions, $\mathcal{N}_P^\phi=\phi$ for any $P$.
	
	\begin{lemma}\label{Lemma "N_p recursion"}
		For any two subsets $ P_1, P_2 $ of the primes and two finite subsets $H_1,H_2$ of $\N_0$,
		\[
		\mathcal{N}_{P_1\triangle P_2}^{H_1\triangle H_2} = \mathcal{N}_{P_1}^{H_1} \triangle \mathcal{N}_{P_1}^{H_2} \triangle \mathcal{N}_{P_2}^{H_1} \triangle \mathcal{N}_{P_2}^{H_2}.
		\]
	\end{lemma}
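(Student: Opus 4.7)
The plan is to translate the set-theoretic identity into a multiplicative identity for the $\{\pm 1\}$-valued functions $\Lambda_P^H$. Since $n \in \mathcal{N}_P^H$ iff $\Lambda_P^H(n) = -1$, and since (as noted in the Groundwork) membership in the symmetric difference of finitely many sets is equivalent to lying in an odd number of them, the lemma will follow once I establish the pointwise identity
\[
\Lambda_{P_1 \triangle P_2}^{H_1 \triangle H_2}(n) \;=\; \Lambda_{P_1}^{H_1}(n)\,\Lambda_{P_1}^{H_2}(n)\,\Lambda_{P_2}^{H_1}(n)\,\Lambda_{P_2}^{H_2}(n),
\]
because the left-hand side equals $-1$ precisely when an odd number of the four factors on the right equal $-1$.

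To prove that pointwise identity I separate the two variables. First, applying Lemma \ref{Lemma "Key lemma"} factor-by-factor in the definition \eqref{Equation "Lambda definition"} gives bilinearity in the prime variable:
\[
\Lambda_{P_1 \triangle P_2}^{H}(n) \;=\; \prod_{h \in H} \lambda_{P_1 \triangle P_2}(n+h) \;=\; \prod_{h \in H} \lambda_{P_1}(n+h)\lambda_{P_2}(n+h) \;=\; \Lambda_{P_1}^{H}(n)\,\Lambda_{P_2}^{H}(n).
\]
Second, I observe the analogous bilinearity in the shift variable: since $\lambda_P(n+h)^2 = 1$ for every $h$, the contributions from $h \in H_1 \cap H_2$ in the product $\Lambda_P^{H_1}(n)\Lambda_P^{H_2}(n)$ square away, leaving exactly $\Lambda_P^{H_1 \triangle H_2}(n)$. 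Applying the prime-variable identity with $H$ replaced by $H_1 \triangle H_2$, and then the shift-variable identity separately to each of the two resulting factors, chains to the displayed identity above.

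There is no real obstacle here: the statement is essentially an $\mathbb{F}_2$-bilinearity of the assignment $(P,H)\mapsto \mathcal{N}_P^H$, transported through the group isomorphism between symmetric differences and pointwise multiplication of $\{\pm 1\}$-valued functions. The only minor care needed is the second bilinearity (in $H$), where one notices that squaring to $1$ plays the same role for shifts as it does for primes in Lemma \ref{Lemma "Key lemma"}; once both bilinearities are in hand, the rest is purely formal parity counting.
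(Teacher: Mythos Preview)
Your proof is correct and follows the same two-step decomposition as the paper---first bilinearity in the prime variable $P$, then bilinearity in the shift variable $H$---but your execution is cleaner. The paper stays on the set side throughout: for the $P$-step it passes through $\Lambda_{P_1\setminus P_2}^H$ and $\Lambda_{P_2\setminus P_1}^H$ and then argues by cases that $\Lambda_{P_1\triangle P_2}^H(n)=-1$ iff $\Lambda_{P_1}^H(n)\neq\Lambda_{P_2}^H(n)$; for the $H$-step it works with $\Omega_P$ and performs a somewhat lengthy parity case analysis on the subsets $H_1\setminus H_2$, $H_2\setminus H_1$, $H_1\cap H_2$. You instead push everything into the multiplicative picture at the outset, where both bilinearities reduce to the single observation that $\lambda_P$ squares to $1$, and only translate back to symmetric differences at the very end via the ``odd number of $-1$'s'' criterion already stated in the Groundwork. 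The trade-off is that the paper's argument is more self-contained at the level of sets, while yours makes the underlying $\mathbb{F}_2$-bilinear structure transparent and avoids all case analysis.
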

	
	\begin{proof}
		The proof is given in two parts. First we show that $	\mathcal{N}_{P_1\triangle P_2}^{H} = \mathcal{N}_{P_1}^{H}\triangle \mathcal{N}_{P_2}^{H}$ for any $H$. If $H=\phi$, the claim follows from definition, so suppose not. Let $ m:= (n+h_1)\ldots(n+h_d) $, where we have chosen $H=\{h_1,\ldots,h_d\}$. From Lemma \ref{Lemma "Key lemma"} and complete multiplicativity,
		\[
		\Lambda_{P_1\triangle P_2}^H(n) = \lambda_{P_1\triangle P_2}^H(m) = \lambda_{P_1\setminus P_2}^H(m)\lambda_{P_2\setminus P_1}^H(m) = \Lambda_{P_1\setminus P_2}^H(n)\Lambda_{P_2\setminus P_1}^H(n).
		\]
		Thus $ \Lambda_{P_1\triangle P_2}^H(n) = -1 $ if and only if $ \Lambda_{P_1\setminus P_2}^H(n) \neq \Lambda_{P_2\setminus P_1}^H(n) $. But, we observe that $ \Lambda_{P_1}^H(n) = \Lambda_{P_1\setminus P_2}^H(n) \Lambda_{P_1\cap P_2}^H(n) $ and $ \Lambda_{P_2}^H(n) = \Lambda_{P_2\setminus P_1}^H(n)\Lambda_{P_1\cap P_2}^H(n) $. Thus, $ \Lambda_{P_1\setminus P_2}^H(n) \neq \Lambda_{P_2\setminus P_1}^H(n) $ if and only if $ \Lambda_{P_1}^H(n) \neq \Lambda_{P_2}^H(n) $. To conclude, $ \Lambda_{P_1\triangle P_2}^H(n)=-1 $ if and only if $ \Lambda_{P_1}^H(n) \Lambda_{P_2}^H(n) = -1 $, that is exactly one of them is $-1$. The latter condition is satisfied precisely on $ \mathcal{N}_{P_1}^{H} \triangle \mathcal{N}_{P_2}^{H} $.
		
		Now we show that $\mathcal{N}_P^{H_1\triangle H_2} = \mathcal{N}_P^{H_1}\triangle \mathcal{N}_P^{H_2}$. If either $H_1, H_2$ is the empty set, then the claim follows from definition, so we suppose not. Let $H_1=\{h_1,\ldots, h_{d_1}\}$ and $H_2 = \{k_1,\ldots, k_{d_2}\}$. Observe that $n\in \mathcal{N}_P^{H_1}$ if and only if $\Omega_P((n+h_1)\ldots(n+h_{d_1})) = \sum_{i=1}^{d_1}\Omega_P(n+h_i)$ is odd, and similarly for $H_2$. Let $H_1\triangle H_2 = \{l_1,\ldots,l_d\}$.
		
		If $n\in \mathcal{N}_P^{H_1\triangle H_2}$, then suppose without loss of generality that $\Omega_P(n+l_i)$ is odd for $1\leqslant i\leqslant e$ (say) (with $l_i\in H_1\triangle H_2$) and  even otherwise. In particular exactly one of $|\{l_1,\ldots,l_e\}\cap (H_1\setminus H_2)|$ or $|\{l_1,\ldots,l_e\}\cap (H_2\setminus H_1)|$ is odd. Suppose without loss of generality that the former holds. Let $\{r_1,\ldots, r_{e'}\}\subseteq H_1\cap H_2$ denote the complete set of integers such that $\Omega_P(n+r_i)$ is odd. If $e'$ is odd, then $n\in \mathcal{N}_P^{H_2}\setminus  \mathcal{N}_P^{H_1}$, and if $e'$ is even, then $n\in \mathcal{N}_P^{H_1}\setminus  \mathcal{N}_P^{H_2}$. In any case $n\in \mathcal{N}_P^{H_1}\triangle \mathcal{N}_P^{H_2}$. Therefore $\mathcal{N}_P^{H_1\triangle H_2}\subseteq \mathcal{N}_P^{H_1}\triangle \mathcal{N}_P^{H_2}$.
		
		Now we prove the reverse inclusion. Suppose without loss of generality that $n\in \mathcal{N}_P^{H_1}\setminus  \mathcal{N}_P^{H_2} $. Let us suppose (with notation as above) that $e'$ is even. The other case may be treated similarly. Choose $l_i$ as above. Then from our choices, it follows that $|\{l_1,\ldots,l_e\}\cap (H_1\setminus H_2)|$ is odd and $|\{l_1,\ldots,l_e\}\cap (H_2\setminus H_1)|$ is even. In particular, $e$ is odd and hence $n\in \mathcal{N}_P^{H_1\triangle H_2}$. This completes the proof.
	\end{proof}

	For $r,s\in \N$, define the arithmetic progression
	\[
	C(r,s):=\left\{n\in \N\ |\ n\equiv -r \mod s\right\}.
	\]
	It follows from the Chinese remainder theorem that if $ a,b $ are co-prime integers, then for any $ r,s $, there exists a unique $ t $ such that 
	\begin{equation}\label{Equation "CRT 0"}
		C(r,a)\bigcap C(s,b) = C(t,ab).
	\end{equation}
	Furthermore, 
	\begin{equation}\label{Equation "Density"}
		\delta(C(r,s)) = \frac{1}{s}
	\end{equation}
	for any $r,s\in \N$. Combining \eqref{Equation "CRT 0"} and \eqref{Equation "Density"}, we get
	\[
	\delta\left(C(r,a)\bigcap C(s,b)\right) = \delta\left(C(r,a)\right) \delta\left(C(s,b)\right)
	\]
	whenever $a,b$ are co-prime. Define $\mathcal{A}_P$ to be the collection all finite unions of the sets
	\[
	\{ \phi\}\cup\left\{C(r,n)\ |\ r\in \N,\ n\in \mathcal{S}_P\right\}.
	\]
	\begin{lemma}\label{Lemma "Properties of AP"}
		For any non-empty subset $P$ of the primes, the following statements are true.
		\begin{enumerate}
			\item	$\mathcal{A}_P$ is closed under intersection.
			\item 	$\mathcal{A}_P$ is closed under complements.
			\item 	Every element of $\mathcal{A}_P$ can be written as a finite disjoint union of sets of the form $\{C(r,n)\}$ for a fixed $n\in \mathcal{S}_P$.
			\item 	The natural density exists for every set $A\in \mathcal{A}_P$.
		\end{enumerate}
	\end{lemma}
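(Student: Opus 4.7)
The plan is to establish the four claims in order, observing that (1) and (2) together say $\mathcal{A}_P$ is a Boolean algebra of subsets of $\N$, that (3) is the key structural statement giving a common-modulus disjoint representation, and that (4) is then immediate from (3) together with \eqref{Equation "Density"}.

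For (1) the crucial preliminary is the following mild strengthening of \eqref{Equation "CRT 0"}: for any $n,m\in \mathcal{S}_P$ (not necessarily coprime) and any $r,s\in\N$, the intersection $C(r,n)\cap C(s,m)$ is either empty, when $r\not\equiv s \pmod{\gcd(n,m)}$, or equal to $C(t,\mathrm{lcm}(n,m))$ for a unique residue $t$ modulo $\mathrm{lcm}(n,m)$. Since $\mathcal{S}_P$ is closed under $\mathrm{lcm}$ (both $n$ and $m$ have all their prime factors in $P$, hence so does $\mathrm{lcm}(n,m)$), the resulting modulus again lies in $\mathcal{S}_P$. Distributing intersection over union then yields (1): if $A=\bigcup_i C(r_i,n_i)$ and $B=\bigcup_j C(s_j,m_j)$, then $A\cap B = \bigcup_{i,j} C(r_i,n_i)\cap C(s_j,m_j)$ is a finite union of sets each of which is either empty or of the form $C(t, \mathrm{lcm}(n_i,m_j))$, and so belongs to $\mathcal{A}_P$.

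For (2), I would first handle the complement of a single residue class via the trivial decomposition $\N\setminus C(r,n) = \bigcup_{s\not\equiv r \pmod n} C(s,n)$, which is a finite union of classes with the same $P$-smooth modulus $n$, hence in $\mathcal{A}_P$. For a general $A=\bigcup_{i=1}^k C(r_i,n_i)\in\mathcal{A}_P$, De Morgan's law gives $\N\setminus A = \bigcap_{i=1}^k (\N\setminus C(r_i,n_i))$, which belongs to $\mathcal{A}_P$ by repeated application of (1).

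For (3), given $A=\bigcup_{i=1}^k C(r_i,n_i)\in \mathcal{A}_P$, I would set $N:=\mathrm{lcm}(n_1,\ldots,n_k)\in \mathcal{S}_P$. Each $C(r_i,n_i)$ partitions as a disjoint union $\bigsqcup_s C(s,N)$ over those $s\in\{0,1,\ldots,N-1\}$ with $s\equiv r_i \pmod{n_i}$. Taking the union over $i$ and collecting the \emph{distinct} classes $C(s,N)$ that appear expresses $A$ as a finite disjoint union of residue classes with common modulus $N\in\mathcal{S}_P$, as required. Statement (4) then drops out immediately: if $A=\bigsqcup_{j=1}^{k'} C(s_j,N)$, finite additivity of natural density together with \eqref{Equation "Density"} yields $\delta(A)=k'/N$. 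None of the steps look technically delicate; the only bit requiring a little care is the bookkeeping in (3), where one must not double-count residue classes modulo $N$ arising from several different $C(r_i,n_i)$.
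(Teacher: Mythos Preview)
Your proof is correct and follows essentially the same route as the paper: both arguments reduce (1) to the intersection of two progressions (empty or a progression modulo the lcm, which stays in $\mathcal{S}_P$), handle (2) via the complement of a single class together with De~Morgan and (1), prove (3) by passing to the common modulus $N=\mathrm{lcm}(n_1,\ldots,n_k)$ and deleting duplicates, and derive (4) immediately from (3). If anything, you are slightly more explicit than the paper in writing out the De~Morgan step and in noting that the intersection in (1) is a \emph{single} class $C(t,\mathrm{lcm}(n,m))$ rather than merely a finite union.
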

	
	\begin{proof}
		\begin{enumerate}
			\item	It is enough to prove this assertion for sets of the form $C(r,n)$ for $n\in \mathcal{S}_P$. If $C(r_1,n_1)\cap C(r_2,n_2) = \phi$, we are done. Otherwise, $C(r_1, n_1)\cap C(r_2, n_2)$ is a finite union of sets of the form $C(r,n_3)$ where $n_3$ is least common multiple of $n_1$ and $n_2$. Therefore $n_3\in \mathcal{S}_P$ and we are done.
			\item 	From above, it is sufficient to show that $C^c(r,n)\in \mathcal{A}_P$. Clearly $C^c(r,n) = \cup_{s\neq r, 1\leqslant s\leqslant n} C(s,n)\in \mathcal{A}_P$.
			\item 	Suppose $n_1, \ldots, n_k\in \mathcal{S}_P$. Denote by $N$, the least common multiple of $\{n_1,\ldots, n_k\}$ and set $m_i = N/n_i$ for every $1\leqslant i \leqslant k$. We have,
			\[
			\bigcup_{i=1}^{k} C(r_i, n_i) = \bigcup_{i=1}^{k} \bigcup_{j=1}^{m_i} C(r_i + jn_i, N).
			\]
			The right hand side maybe written as a disjoint union by avoiding repetitions if there are any. 
			\item 	The natural density clearly exists for every set of the form $C(r,n)$ and the claim follows from above. This completes the proof.
		\end{enumerate}
	\end{proof}
		
	\begin{proposition}\label{Proposition "Multiplicative AP"}
		 Suppose that $P_1, P_2$ are non-empty mutually disjoint subsets of primes. Then $\delta(A\cap B) = \delta(A)\delta(B)$ for any $A\in \mathcal{A}_{P_1}$ and $B\in \mathcal{A}_{P_2}$.
	\end{proposition}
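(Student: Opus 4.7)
The strategy is to reduce to the single-progression case via part (3) of Lemma \ref{Lemma "Properties of AP"} and then invoke the multiplicativity coming from the Chinese remainder theorem, which applies because the disjointness hypothesis $P_1 \cap P_2 = \phi$ forces any $n_1 \in \mathcal{S}_{P_1}$ to be coprime to any $n_2 \in \mathcal{S}_{P_2}$.

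Concretely, I would proceed as follows. By Lemma \ref{Lemma "Properties of AP"} (3), there exist $n_1 \in \mathcal{S}_{P_1}$ and residues $r_1,\ldots,r_k$ (all distinct modulo $n_1$) such that
\[
A \;=\; \bigsqcup_{i=1}^{k} C(r_i, n_1),
\]
and likewise $n_2 \in \mathcal{S}_{P_2}$ and distinct residues $s_1,\ldots,s_\ell$ modulo $n_2$ with $B = \bigsqcup_{j=1}^{\ell} C(s_j, n_2)$. Then $\delta(A) = k/n_1$ and $\delta(B) = \ell/n_2$ by \eqref{Equation "Density"} and finite additivity of natural density on disjoint unions of arithmetic progressions.

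Next, because $P_1$ and $P_2$ are disjoint, $\gcd(n_1,n_2) = 1$. Intersecting gives the disjoint decomposition
\[
A \cap B \;=\; \bigsqcup_{i=1}^{k}\bigsqcup_{j=1}^{\ell} \bigl(C(r_i,n_1) \cap C(s_j,n_2)\bigr),
\]
and by \eqref{Equation "CRT 0"} each summand equals $C(t_{ij}, n_1 n_2)$ for some $t_{ij}$ determined by the Chinese remainder theorem. Disjointness of the original decompositions of $A$ and $B$ ensures that the $t_{ij}$ are distinct modulo $n_1 n_2$ (if $t_{ij} = t_{i'j'}$ then $r_i \equiv r_{i'} \pmod{n_1}$ and $s_j \equiv s_{j'} \pmod{n_2}$, forcing $(i,j) = (i',j')$). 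Applying \eqref{Equation "Density"} to each piece,
\[
\delta(A \cap B) \;=\; \sum_{i=1}^{k}\sum_{j=1}^{\ell} \frac{1}{n_1 n_2} \;=\; \frac{k \ell}{n_1 n_2} \;=\; \delta(A)\,\delta(B),
\]
which is the desired identity.

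There is no real obstacle here: the only substantive input is the coprimality $\gcd(n_1,n_2)=1$, which is immediate from $P_1 \cap P_2 = \phi$, and the bookkeeping that the Chinese remainder theorem sends two disjoint unions to one disjoint union. The rest is counting. The statement is essentially that $\mathcal{A}_{P_1}$ and $\mathcal{A}_{P_2}$ behave like independent $\sigma$-subalgebras with respect to natural density under the disjointness assumption on the prime sets.
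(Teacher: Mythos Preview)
Your proof is correct and follows essentially the same route as the paper's own argument: reduce via Lemma~\ref{Lemma "Properties of AP"}(3) to disjoint unions of progressions with a common modulus in $\mathcal{S}_{P_1}$ (resp.\ $\mathcal{S}_{P_2}$), use coprimality of the moduli together with \eqref{Equation "CRT 0"} to intersect, and count. If anything, you are slightly more explicit than the paper in verifying the distinctness of the $t_{ij}$ modulo $n_1 n_2$.
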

	
	\begin{proof}
		Suppose $A$ and $B$ are given as above. Then from Lemma \ref{Lemma "Properties of AP"}, there exists $N\in \mathcal{S}_{P_1}$ and $M\in \mathcal{S}_{P_2}$ such that 
		\begin{align*}
			A = \bigcup_{i=1}^{k_1} C(r_i, N) && \mbox{ and }&&B = \bigcup_{j=1}^{k_2} C(s_j, M),
		\end{align*}
		where each of the above is a \textit{disjoint union}. Therefore from the Chinese remainder theorem (see \eqref{Equation "CRT 0"}), there exists \textit{distinct} (modulo $NM$) $t_l$'s such that
		\[
		A\cap B = \bigcup_{l=1}^{k_1k_2} C(t_l, NM).
		\]
		From here, we may deduce that $\delta(A\cap B) = \frac{k_1k_2}{NM} = \delta(A)\delta(B)$.
	\end{proof}
	
	\section{Proof of Theorem \ref{Theorem "Main Theorem"}}\label{Section "Proof - Main"}
	
	 \subsection{Proof of Theorem \ref{Theorem "Main Theorem"} when $P$ is finite:}
	 
	 We simplify notation and write $\mathcal{N}_p^h$ for $\mathcal{N}_{\{p\}}^{\{h\}}$, $\Omega_{p}$ for $\Omega_{\{p\}}$ etc.	 
	 \begin{proposition}\label{Proposition "Approximation"}
	 	Given a finite subset $P$ of the primes and $H\subseteq \N_0$ and an integer $r$, there exists subsets $X_P^H(r), Y_P^H(r)$ in $\mathcal{A}_P$ such that 
	 	\[
	 	X_P^H(r)\subseteq \mathcal{N}_P^H\subseteq Y_P^H(r)
	 	\]
	 	and 
	 	\[
	 	\lim_{r\to\infty} \delta(X_P^H(r)) = \lim_{r\to\infty} \delta(Y_P^H(r)).
	 	\]
	 \end{proposition}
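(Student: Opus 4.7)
The plan is to exploit the finiteness of $P$ to reduce, for each fixed accuracy $r$, the event $n\in\mathcal{N}_P^H$ to a congruence condition modulo $N_r:=\prod_{p\in P}p^r\in\mathcal{S}_P$, up to a controlled exceptional set where some $n+h_i$ carries unusually many factors of some $p\in P$. The key observation is that $v_p(n+h_i)=v_p(a+h_i)$ whenever $n\equiv a\pmod{p^r}$ and $p^r\nmid a+h_i$. Since $\Lambda_P^H(n)=(-1)^{\sum_{p\in P,\,i}v_p(n+h_i)}$, the sign $\Lambda_P^H$ is therefore constant on the class $C(a,N_r)$ provided $(a+h_i)$ is not divisible by $p^r$ for any $(p,h_i)\in P\times H$.

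Call such an $a\bmod N_r$ \emph{admissible}, let $\epsilon(a)\in\{\pm 1\}$ denote the constant value of $\Lambda_P^H$ on $C(a,N_r)$, and define
\[
X_P^H(r) := \bigcup_{\substack{a\ \mathrm{admissible}\\ \epsilon(a)=-1}} C(a,N_r), \qquad Y_P^H(r) := X_P^H(r)\cup\bigcup_{a\ \mathrm{inadmissible}} C(a,N_r).
\]
Both sets lie in $\mathcal{A}_P$ since $N_r\in\mathcal{S}_P$. The inclusion $X_P^H(r)\subseteq\mathcal{N}_P^H$ is immediate from the definition of $\epsilon(a)$. Conversely, if $n\in\mathcal{N}_P^H$ and its residue class $a$ modulo $N_r$ is admissible, then $\epsilon(a)=\Lambda_P^H(n)=-1$ places $n$ in $X_P^H(r)$; if $a$ is inadmissible, $n$ lies in $Y_P^H(r)$ by construction. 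Hence $\mathcal{N}_P^H\subseteq Y_P^H(r)$.

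For the convergence of the densities, a union bound shows that the number of inadmissible classes modulo $N_r$ is at most $\sum_{p\in P}\sum_{h\in H}N_r/p^r$, so
\[
\delta(Y_P^H(r)) - \delta(X_P^H(r)) \leqslant |H|\sum_{p\in P} p^{-r} \xrightarrow{r\to\infty} 0.
\]
Moreover, if $a\bmod N_r$ is admissible and $a'\equiv a\pmod{N_r}$ is any lift modulo $N_{r+1}$, then $v_p(a'+h_i)=v_p(a+h_i)<r<r+1$ for every $(p,h_i)$, so $a'$ is admissible modulo $N_{r+1}$ with $\epsilon(a')=\epsilon(a)$. This yields $X_P^H(r)\subseteq X_P^H(r+1)$, and a dual analysis of the three types of classes (admissible with $\epsilon=-1$, admissible with $\epsilon=+1$, inadmissible) gives $Y_P^H(r+1)\subseteq Y_P^H(r)$. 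The two densities are therefore monotone with vanishing difference, and hence converge to a common limit. The point to monitor is the control of the inadmissible classes, which requires $\sum_{p\in P}p^{-r}\to 0$; this is precisely where finiteness of $P$ enters, and this is also the step that will ultimately force the small-set hypothesis in the passage to infinite $P$ later in the paper.
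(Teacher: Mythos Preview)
Your proof is correct and takes a genuinely different route from the paper's. The paper proceeds by double induction, first on $|H|$ and then on $|P|$: for a single prime $p$ and single shift $h$ it writes $\mathcal{N}_p^h$ explicitly as the disjoint union $\bigcup_{t}\bigl(C(h,p^{2t-1})\setminus C(h,p^{2t})\bigr)$, obtaining $\eta_p^h=\frac{1}{p+1}$; it then uses Lemma~\ref{Lemma "N_p recursion"} and a recursive reduction via the projection $H\to\Z/p\Z$ (scaling and translating $H$ until the projection becomes non-constant) to handle general $H$ for a single prime; and finally it uses the relation $\mathcal{N}_P^H=\mathcal{N}_{P'}^H\triangle\mathcal{N}_{\{p_k\}}^H$ together with Proposition~\ref{Proposition "Multiplicative AP"} to pass from $P'=P\setminus\{p_k\}$ to $P$. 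Your argument instead fixes a single modulus $N_r=\prod_{p\in P}p^r$ at once and partitions the residue classes into admissible and inadmissible, which is more direct and bypasses Lemma~\ref{Lemma "N_p recursion"} entirely. The cost is that the paper's inductive route yields, as byproducts, the explicit value $\eta_p^h=\frac{1}{p+1}$, the bound $\eta_p^H\leqslant\frac{d}{p+1}$ of \eqref{Equation "eta_p evaluation"}, and the recursion $\eta_P^H=\eta_{P'}^H(1-\eta_{p_k}^H)+\eta_{p_k}^H(1-\eta_{P'}^H)$ of \eqref{Equation "eta_P recursion"}; these are used immediately afterward to derive the Euler-product form of $\kappa_P^H$ and to control the tail in \eqref{Equation "Chain"}. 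Your approach establishes the proposition cleanly but would require extracting these quantities separately.
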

	
	\begin{proof}
		The proof is by double induction, first on $|H|$ and then on $|P|$. Let us first prove the statement for a fixed prime $p$ and when $H=\{h\}$. Then $\mathcal{N}_p^h$ contains precisely those integers $n$ such that an odd power of $p$ properly divides $n+h$. In particular,
		\begin{equation}\label{Equation "N_p^h union"}
			\mathcal{N}_p^h = \bigcup_{r=1}^{\infty} \left(C(h, p^{2r-1})\setminus C(h, p^{2r})\right).
		\end{equation}
		Since the above union is a disjoint union, we have the inequalities,
		\begin{equation}\label{Equation "X_p, Y_p 1"}
			\bigcup_{t=1}^{r} \left(C(h, p^{2t-1})\setminus C(h, p^{2t})\right) \subseteq \mathcal{N}_p^h \subseteq \bigcup_{t=1}^{r}\left( C(h, p^{2t-1})\setminus C(h, p^{2t})\right) \bigcup C(h, p^{2r-1})
		\end{equation}
		for any $r$. Call the sets on the left hand side and right hand side of \eqref{Equation "X_p, Y_p 1"} as $X_p^h(r)$ and $Y_p^h(r)$ respectively. We observe that $\delta(X_p^h(r))\leqslant \delta(Y_p^h(r))\leqslant \delta(X_p^h(r)) + p^{-2r+1}$. Letting $r\to\infty$, we see that $\delta(\mathcal{N}_p^h)$ exists and equals
		\begin{equation}\label{Equation "Temp 1"}
			\delta\left(\mathcal{N}_p^h\right) = \sum_{r=1}^{\infty} \left(\frac{1}{p^{2r-1}} - \frac{1}{p^{2r}}\right)= \frac{1}{p+1}.
		\end{equation}
		This completes the proof in this case.
		
		Suppose that $H= \{h_1,\ldots,h_d\}$ and consider $\mathcal{N}_p^H$. Let $\varphi : H\to \Z/p\Z$ denote the natural projection map. From Lemma \ref{Lemma "N_p recursion"},
		\begin{equation}\label{Equation "Temp"}
			\mathcal{N}_p^H =  \mathcal{N}_p^{\varphi^{-1}([0])}\triangle\ldots \triangle \mathcal{N}_p^{\varphi^{-1}([p-1])} = \mathcal{N}_p^{h_1}\triangle\ldots \triangle \mathcal{N}_p^{h_d}
		\end{equation}
		In particular, from \eqref{Equation "Temp 1"}, it follows that 
		\begin{equation}\label{Equation "eta_p evaluation"}
			\delta^+\left(\mathcal{N}_p^H\right)\leqslant \frac{d}{p+1}.
		\end{equation}
		But from \eqref{Equation "N_p^h union"}, we may easily deduce that $\mathcal{N}_p^{h_i}\cap \mathcal{N}_p^{h_j}\neq \phi$ only if $p| (h_i-h_j)$. Therefore we may rewrite the first equality in \eqref{Equation "Temp"} as the disjoint union
		\begin{equation}\label{Equation "Disjoint union"}
			\mathcal{N}_p^H = \bigcup_{i=0}^{p-1} \mathcal{N}_p^{\varphi^{-1}([i])}.
		\end{equation}
		If $\varphi$ is a non-constant function, then $\eta_p^H$ exists by induction on $|H|$ and we are done. In particular, we may suppose that $H\neq \{0,1\},\{1,2\}$ or $\{0,1,2\}$. 
		
		Suppose that $\varphi$ is a constant function, say $\varphi(H) = \{[i_1]\}$ for some $0\leqslant i_1\leqslant p-1$. If $n\not\equiv -i_1\mod p$, then $p\nmid (n+h)$ for any $h\in H$ and hence $\Lambda_p^H(n)=1$. Therefore, $n\in \mathcal{N}_p^H$ only if $n\equiv -i_1\mod p$. Moreover, in this situation,
		\[
		\Lambda_p^H(n) = \lambda_p(n+h_1)\ldots\lambda_p(n+h_d) = (-1)^d \lambda_p\left(\frac{n}{p} + \frac{h_1}{p}\right)\ldots \lambda_p\left(\frac{n}{p} + \frac{h_d}{p}\right).
		\]
		Let $H_1:=\frac{1}{p}(H-i_1)$. We observe that $\max\{H_1\} < \max\{H\}$. Rewriting the above,
		\begin{equation}\label{Equation "Calculation"}
			\mathcal{N}_p^H = \begin{cases}
				p \mathcal{N}_p^{H_1} - i_1&\mbox{if }2| d\\
				p \left(\mathcal{N}_p^{H_1}\right)^c - i_1&\mbox{otherwise.}
			\end{cases}
		\end{equation}
		In either case, it is enough to prove the proposition for $\mathcal{N}_p^{H_1}$. Now, if we consider the natural projection map $\varphi_1: H_1\to \Z/p\Z$, we may argue as above. If $\varphi_1$ is a non-constant map, we are done. Otherwise we obtain (if necessary, after translation and scaling as above) $H_2:= \frac{1}{p} (H_1-i_2)$, and the problem reduces to that of $\mathcal{N}_p^{H_2}$, with $\max\{H_2\} < \max\{H_1\}$. Proceeding forward this process ultimately terminates because $\max\{H_i\}$ is strictly decreasing. In particular, there is a stage $r$ where the projection map (say $\varphi_r : H_r\to \Z/p\Z$) is non-constant and we may then apply induction on $|H|$. This completes the proof when $|P|=1$. We note in passing that for large enough primes $p$, $\varphi$ is an injection and hence from \eqref{Equation "Temp 1"} and \eqref{Equation "Disjoint union"} we have $\eta_p^H = \frac{d}{p+1}.$
		
		Now fix $H$ and suppose that $P = \{p_1,p_2,\ldots, p_k\}$. We shall prove the proposition holds for $P$ supposing the same for $P':=\{p_1,p_2,\ldots, p_{k-1}\}$ and $\{p_k\}$. From Lemma \ref{Lemma "N_p recursion"}, 
		\[
		\mathcal{N}_P^H = \mathcal{N}_{P'}^H\triangle \mathcal{N}_{\{p_k\}}^H.
		\]
		For every $r$, choose $X_{P'}^H(r), Y_{P'}^H(r), X_{\{p_k\}}^H(r), Y_{\{p_k\}}^H(r)$ from induction. Then, we have
		\[
		\left(X_{P'}^H(r)\setminus Y_{\{p_k\}}^H(r)\right) \bigcup \left(X_{\{p_k\}}^H(r) \setminus Y_{P'}^H(r)\right) \subseteq \mathcal{N}_P^H \subseteq \left(Y_{P'}^H(r)\setminus X_{\{p_k\}}^H(r)\right) \bigcup \left(Y_{\{p_k\}}^H(r) \setminus X_{P'}^H(r)\right).
		\]
		From Lemma \ref{Lemma "Properties of AP"}, both the left hand side and the right hand side above belong to $\mathcal{A}_P$. In particular the natural densities exist for each of those sets. Considering (upper) natural densities (and observing that $X_{P'}^H(r)\subseteq Y_{P'}^H(r)$) we get,
		\begin{multline*}
			\delta\left(X_{P'}^H(r)\setminus Y_{\{p_k\}}^H(r)\right) +  \delta\left(X_{\{p_k\}}^H(r) \setminus Y_{P'}^H(r)\right)  \leqslant \delta^+(\mathcal{N}_P^H )\\
			\leqslant \delta\left(Y_{P'}^H(r)\setminus X_{\{p_k\}}^H(r)\right) + \delta \left(Y_{\{p_k\}}^H(r) \setminus X_{P'}^H(r)\right).
		\end{multline*}
		From Proposition \ref{Proposition "Multiplicative AP"}, we get
		\begin{multline*}
			\delta(X_{P'}(r))(1-\delta(Y_{\{p_k\}}(r))) + \delta(X_{\{p_k\}}(r))(1-\delta(Y_{P'}(r))) \leqslant \delta^+(\mathcal{N}_P) \\
			\leqslant \delta(Y_{P'}(r))(1-\delta(X_{\{p_k\}}(r))) + \delta(Y_{\{p_k\}}(r))(1-\delta(X_{P'}(r))).
		\end{multline*}
		Similarly, we may also consider lower natural densities. Letting $r\to\infty$, we prove $\delta(\mathcal{N}_P^H)$ exists and equals
		\begin{equation}\label{Equation "eta_P recursion"}
			\eta_{P}^H = \eta_{P'}^H(1-\eta_{p_k}^H) + \eta_{p_k}^H(1-\eta_{P'}^H).
		\end{equation}
	\end{proof}
	
	\begin{example}\label{Example}
		Given a particular choice of $H$ and $p$, it is possible to calculate $\eta_p^H$ using the above arguments, particularly using \eqref{Equation "Calculation"}. We briefly demonstrate this now for the choice of $p=2,3$ and $H=\{0,4,6\}$. Observe that $d=3$ is odd. On applying \eqref{Equation "Disjoint union"} and \eqref{Equation "Calculation"} repeatedly, we get 
		\[
		\eta_2^{\{0,4,6\}} = \frac{1}{2}\left(1 - \eta_2^{\{0,2,3\}}\right)	= \frac{1}{2}\left(1 - \left(\eta_2^{\{0,2\}} + \eta_2^{\{3\}}\right)\right)	= \frac{1}{2}\left(1 - \frac{(\eta_2^{\{0\}} + \eta_2^{\{1\}})}{2} - \eta_2^{\{3\}}\right) = \frac{1}{6}.
		\]
		where in the last step, we have used \eqref{Equation "Temp 1"}. Similarly,
		\[
		\eta_3^{\{0,4,6\}} = \eta_3^{\{0,6\}} + \eta_3^{\{4\}} = \frac{1}{3}\eta_3^{\{0,2\}} + \eta_3^{\{4\}} = \frac{1}{3}\cdot \frac{2}{4} + \frac{1}{4} = \frac{5}{12}.
		\]
	\end{example}
	
	To complete the proof of Theorem \ref{Theorem "Main Theorem"} when $P$ is finite, we observe that
	\begin{multline*}
		\kappa_{P}^{H}=\lim_{x\to\infty} \frac{1}{x} \sum_{n\leqslant x} \Lambda_{P}^H(n)
		=\\ \lim_{x\to\infty} \frac{1}{x}\left( \left|\left\{n\leqslant x\ |\ \Lambda_{P}^H(n)=1\right\}\right| -  \left|\left\{n\leqslant x\ |\ \Lambda_{P}^H(n)=-1\right\}\right| \right)
		= 1-2\eta_P^H.
	\end{multline*}
	Suppose that $P = \{p_1,p_2,\ldots, p_k\}$ and $P':=\{p_1,p_2,\ldots, p_{k-1}\}$ as above. From \eqref{Equation "eta_P recursion"}, 
	\begin{multline*}
		1 - 2\eta_P^H = 1 - 2\left(\eta_{P'}^H(1-\eta_{p_k}^H) + \eta_{p_k}^H(1-\eta_{P'}^H)\right)\\
		= 1- 2\eta_{P'}^H + 2\eta_{P'}^H\eta_{p_k}^H - 2 \eta_{p_k}^H + 2\eta_{p_k}^H\eta_{P'}^H
		= (1-2\eta_{P'}^H)(1-2\eta_{p_k}^H).
	\end{multline*}
	Now we may complete the proof via induction.
	
	 \subsection{Proof of Theorem \ref{Theorem "Main Theorem"} when $P$ is a small set:}	Suppose $H=\{h_1,\ldots,h_d\}$ is fixed as above and that $P = \{p_1,p_2,\ldots\}$ is an infinite (small) set and let $P_i = \{p_1, p_2, \ldots, p_i\}$. We show that $\eta_{P}^H$ exists and equals $\lim_{i\to\infty} \eta_{P_i}^H$. It will be convenient to set $S_P(x) = \frac{1}{x} \sum_{n\leqslant x} \Lambda_{P}^H(n) $. From Lemma \ref{Lemma "Key lemma"}, $ \Lambda_{P_i}^H(n)\neq \Lambda_{P}^H(n) $ if and only if $ \Lambda_{P\setminus P_{i}}^H(n) = -1 $. Hence, 
	\begin{equation}\label{Equation "Chain"}
		\limsup_{x\to\infty}\left|S_{P_i}(x) - S_P(x)\right| \leqslant 2\delta^+\left(\mathcal{N}_{P\setminus P_i}^H\right)\leqslant 2\sum_{p\in P\setminus P_i} \eta_p^H\leqslant 2d \sum_{p\in P\setminus P_i}\frac{1}{p+1},
	\end{equation}
	where the last inequality follows from \eqref{Equation "eta_p evaluation"}. As $P$ is a small set, the right hand side goes to zero as $i\to \infty$. This in particular shows $\eta_P^H$ exists because the left hand side of \eqref{Equation "Chain"} dominates
	\[
	\max\left\{\left|\limsup_{x\to\infty}S_{P_i}(x) - \liminf_{x\to\infty} S_P(x)\right|, \left|\limsup_{x\to\infty}S_{P}(x) - \liminf_{x\to\infty} S_{P_i}(x)\right|\right\}.
	\]
	Moreover, this also implies that $\eta_P^H = \lim_{i\to\infty} \eta_{P_i}^H$ thereby completing the proof.
	
	\begin{example}
		Interestingly if $H=\{1,2\}$ and $P$ is a small set containing $3$, the above calculations and proof give us that $\kappa_P^H=0$. More generally, for any given prime odd $p$, we may also choose $H= \{1,\ldots, \frac{p+1}{2}\}$ and any small set $P$ containing $p$ and get $\kappa_{P}^H=0$.
	\end{example}
			
	\section{Proof of Theorem \ref{Theorem "Second Main Theorem"}}\label{Section "Proof second main"}
	
	We denote the complement of $\mathcal{N}_P^H$ (in $\N$) as $\mathcal{P}_P^H$. Recall that $\kappa_P^{\phi}=1$.
	
	\begin{lemma}\label{Lemma "Second main theorem 1"}
		If there exists sets $P,H_1, H_2$ such that $|\kappa_P^{H_1}|=|\kappa_P^{H_2}|=1$, then $\kappa_P^{H_1\triangle H_2}=\kappa_P^{H_1}\kappa_P^{H_2}$.
	\end{lemma}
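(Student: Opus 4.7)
The plan is to combine a pointwise multiplicative identity for $\Lambda_P^H$ (in the $H$ variable) with the elementary observation that a $\{\pm 1\}$-valued sequence whose Ces\`aro mean equals $\pm 1$ must be constant on a density-one set. First I would establish the pointwise identity
\[
\Lambda_P^{H_1 \triangle H_2}(n) = \Lambda_P^{H_1}(n)\,\Lambda_P^{H_2}(n)
\]
valid for every $n \in \N$. This follows at once from $\lambda_P(m)^2 = 1$: expanding the right-hand side as $\prod_{h \in H_1}\lambda_P(n+h)\,\prod_{h \in H_2}\lambda_P(n+h)$, the factors indexed by $h \in H_1 \cap H_2$ appear twice and cancel, leaving precisely the product over $H_1 \triangle H_2$. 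Equivalently, this is the $P_2 = \phi$ specialisation of Lemma \ref{Lemma "N_p recursion"}.

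Next, set $\epsilon_i := \kappa_P^{H_i} \in \{\pm 1\}$. Since $\Lambda_P^{H_i}(n) \in \{\pm 1\}$ and its Ces\`aro average converges to $\epsilon_i$, the exceptional set $E_i := \{n \in \N : \Lambda_P^{H_i}(n) = -\epsilon_i\}$ must have natural density zero; otherwise the summands contributing $-\epsilon_i$ would force the Ces\`aro limit strictly below absolute value one, contradicting $|\kappa_P^{H_i}| = 1$. Hence $\delta(E_1 \cup E_2) = 0$, and off this null set the pointwise identity yields $\Lambda_P^{H_1 \triangle H_2}(n) = \epsilon_1 \epsilon_2$. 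Splitting the Ces\`aro average of $\Lambda_P^{H_1 \triangle H_2}$ according to membership in $E_1 \cup E_2$, the exceptional contribution is $o(1)$ while the main term tends to $\epsilon_1 \epsilon_2$; this shows simultaneously that $\kappa_P^{H_1 \triangle H_2}$ exists and that it equals $\epsilon_1 \epsilon_2 = \kappa_P^{H_1}\kappa_P^{H_2}$, as required.

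I do not anticipate any real obstacle: the entire content is the pointwise multiplicativity of $\Lambda_P^H$ in $H$, after which the lemma reduces to the trivial fact that a $\{\pm 1\}$-valued sequence with Ces\`aro limit $\pm 1$ is constant on a density-one set. The only subtlety worth explicitly recording is that $\kappa_P^{H_1 \triangle H_2}$ in fact exists (rather than merely that its limsup and liminf are bounded by one in absolute value), but this is automatic from the density-zero exceptional set above.
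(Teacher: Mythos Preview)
Your proof is correct and follows essentially the same approach as the paper: both rely on the pointwise multiplicativity $\Lambda_P^{H_1\triangle H_2}(n)=\Lambda_P^{H_1}(n)\Lambda_P^{H_2}(n)$ together with the observation that a $\{\pm1\}$-sequence with Ces\`aro mean $\pm1$ is constant on a density-one set. Your version is in fact a bit more streamlined, applying the identity directly rather than first decomposing through $H_1\setminus H_2$, $H_1\cap H_2$, $H_2\setminus H_1$ as the paper does.
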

	
	\begin{proof}
	We have
		\[
		\kappa_{P}^{H_1} = \lim_{x\to\infty} \frac{1}{x} \sum_{n\leqslant x} \Lambda_{P}^{H_1\setminus H_2}(n) \Lambda_{P}^{H_1\cap H_2}(n)=\pm 1.
		\]
		In particular, this means that $\Lambda_P^{H_1\setminus H_2}(n) = \kappa_P^{H_1}\Lambda_P^{H_1\cap H_2}(n)$ on a set of natural density equal to $1$. Reversing the roles of $H_1,H_2$, we obtain that $\Lambda_P^{H_2\setminus H_1}(n) =  \kappa_P^{H_2}\Lambda_P^{H_1\cap H_2}(n)$ on a set of natural density equal to $1$, and in particular $\Lambda_P^{H_2\setminus H_1}(n)\Lambda_P^{H_1\setminus H_2}(n)=  \kappa_P^{H_1}\kappa_P^{H_2}$ on a set of natural density equal to $1$. Thus we have shown that
		\[
		\kappa_{P}^{H_1\triangle H_2} = \lim_{x\to\infty} \frac{1}{x} \sum_{n\leqslant x} \Lambda_{P}^{H_1\setminus H_2}(n) \Lambda_{P}^{H_2\setminus H_1}(n) =  \kappa_P^{H_1} \kappa_P^{H_2},
		\]
		completing the proof.
	\end{proof}
	
	We borrow the proof of the following lemma from combinatorics \cite{MarkWildon}.
	
	\begin{lemma}[Wildon]\label{Lemma "Second main theorem 2"}
		Suppose $\mathcal{H}$ is a non-empty collection of finite subsets of $\N_0$ which is closed under symmetric difference and translation, containing at least one non-empty set. Then $\h$ contains a set of two elements.
	\end{lemma}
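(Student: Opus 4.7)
My plan is to translate the statement into commutative algebra over $\F_2[x]$. To each finite subset $A\subseteq \N_0$ associate its generating polynomial $p_A(x) := \sum_{a \in A} x^a \in \F_2[x]$ (with $p_\phi = 0$). Under this bijection, symmetric difference of sets corresponds to addition in $\F_2[x]$, and translation $A \mapsto A + c$ corresponds to multiplication by $x^c$. Thus the hypotheses on $\h$ say exactly that $I := \{p_A : A \in \h\} \subseteq \F_2[x]$ is a non-zero subset closed under addition and under multiplication by $x$, hence a non-zero ideal. Since $\F_2[x]$ is a principal ideal domain, $I = (f)$ for some non-zero $f$.

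Two-element subsets $\{a, b\}$ with $a < b$ correspond precisely to polynomials of the form $x^a(1 + x^n)$ with $n \geq 1$. Write $f = x^c g$ with $g(0) = 1$. Producing a two-element set in $\h$ is then equivalent to exhibiting $n \geq 1$ with $g \mid 1 + x^n$, since one can then take the pair $\{c, c+n\}$.

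To establish this divisibility I would pass to the finite quotient ring $R := \F_2[x]/(g)$, which has cardinality $2^{\deg g}$. Because $\gcd(g, x) = 1$ by construction, the class of $x$ is a unit in $R$ and hence lies in the finite multiplicative group $R^\times$. Let $n \geq 1$ be its order; then $x^n \equiv 1 \pmod{g}$, i.e., $g \mid x^n + 1$, completing the proof.

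The only step requiring genuine content is the last paragraph; everything else is bookkeeping, and the main subtlety is separating out the factor of $x$ in $f$, which is handled by the decomposition $f = x^c g$. An alternative more bare-handed route uses that every element of $\overline{\F_2}^\times$ is a root of unity, combined with the characteristic-two Frobenius identity $(x^m + 1)^{2^k} = x^{m \cdot 2^k} + 1$ to absorb the multiplicities of irreducible factors of $g$; but the quotient-ring argument seems cleanest.
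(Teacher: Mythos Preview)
Your proof is correct and follows the same overall strategy as the paper: encode finite subsets of $\N_0$ as polynomials in $\F_2[x]$, observe that $\mathcal{H}$ corresponds to a non-zero principal ideal $(f)$, and then exhibit a binomial $x^a+x^b$ in this ideal. There are two minor differences worth noting. First, the paper uses the \emph{negative}-translation closure of $\mathcal{H}$ to argue directly that the generator satisfies $f(0)\neq 0$ (so your $c=0$); you instead carry the factor $x^c$ along, which works just as well and avoids that extra observation. Second, to show that $g$ divides some $x^n+1$, the paper passes to the splitting field $\F_{2^r}$ of $g$ and then uses the Frobenius identity $(t^m+1)^{2^k}=t^{m\cdot 2^k}+1$ to absorb repeated roots --- precisely the ``bare-handed'' alternative you sketch at the end of your proposal. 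Your primary argument via the multiplicative order of $x$ in the finite ring $\F_2[x]/(g)$ is shorter and sidesteps any discussion of multiplicities or splitting fields.
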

	
	By translation, we mean that if $H\in \h$ and $a\in \Z$ is such that $H+a\subseteq \N_0$, then $H+a\in \h$.
	
	\begin{proof}
		For a given prime power $q$, let $\F_q$ denote the field of $q$ elements. Consider the map $\varphi : \h\to \F_2(t)$ given by $H\mapsto \sum_{h\in H} t^{h}$. By convention $\varphi(\phi)=0$. We observe that $\varphi(H_1\triangle H_2) = \varphi(H_1) + \varphi(H_2)$ and that $\varphi(H+a) = t^a \varphi(H)$. These two observations together tell us that $\varphi(\h)$ is an ideal of $\F_2(t)$. Suppose this ideal is generated by $f(t)$. Because $\h$ is closed under translation as above, $f(0)\neq 0$ (equivalently $t\nmid f(t)$). Let $\F_{2^r}$ be the splitting field of $f$. Allowing for multiple roots if needed, we see that there exists an integer $m$ such that $f(t) | (t^{2^{r}-1}+1)^m$, that is, there exists a polynomial $q(t)$ such that $q(t)f(t) = (t^{2^{r}-1}+1)^m$. If $n$ is so that $2^n\geqslant m$, then
		\[
		f(t) g(t)(t^{2^{r}-1}+1)^{2^n-m} = (t^{2^{r}-1}+1)^{2^n} = t^{(2^{r}-1)2^n}+1.
		\]
		But the left hand side is an element of $\varphi(\h)$ and hence $\{1, (2^{r}-1)2^n+1\}\in \h$.
	\end{proof}
	
	\begin{proof}[Proof of Theorem \ref{Theorem "Second Main Theorem"}]
		Suppose that $P$ is given such that $|\kappa_{P}^{H_0}|=1$ for some (non-empty) $H_0$. Let $\h_P$ denote the collection of all finite subsets $H$ of $\N_0$ such that $|\kappa_P^H|=1$. Clearly $\h_P$ is a non-empty collection and is closed under translation.  From Lemma \ref{Lemma "Second main theorem 1"}, $\h_P$ is closed under symmetric difference. Then by Lemma \ref{Lemma "Second main theorem 2"}, $\h_P$ contains a set of order two. This however contradicts Theorem \ref{Theorem "MatoRadz"} unless $P=\phi$. This completes the proof.
	\end{proof}
	
	\begin{remark}\label{Remark "2 set"}
		A family $\h$ satisfying the conditions of Lemma \ref{Lemma "Second main theorem 1"} need not contain a singleton element. Thus, the above argument fails without the knowledge of Theorem \ref{Theorem "MatoRadz"}. In particular, Theorem \ref{Theorem "Wintner-Wirsing"} is not sufficient to deduce Theorem \ref{Theorem "Second Main Theorem"}.
	\end{remark}
	
	\begin{remark}
		It is an desirable to prove an analogue of \eqref{Equation "MatoRadz"} in place of \eqref{Equation "Liminf"} and \eqref{Equation "Limsup"}. However, the current proof does not seem to yield that strengthening. The challenge is to improve upon Lemma \ref{Lemma "Second main theorem 1"}. More precisely, if two subsets of natural numbers have natural density equal to $1$, then so does their intersection and this natural density is once again equal to $1$. However, this is no longer true for upper or lower natural densities.
	\end{remark}

	\section{The spectrum of $\kappa_P^H$}\label{Section "Spectrum"}
	
	Let $H\neq \phi$ be given. Define $\alpha_H:=\inf_{p}\{(1-2\eta_p^H)\}$ where the infimum runs over all the primes. Furthermore, we remark that there exists a prime $p$ such that $\alpha_H = 1-2\eta_p^H$.
	
	\begin{proposition}\label{Proposition "Spectrum"}
		With notation as above, $\Gamma_H = [\alpha_H,1] \cup [0,1]$.
	\end{proposition}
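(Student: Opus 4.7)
The plan is to prove the two inclusions $\Gamma_H \subseteq [\alpha_H, 1] \cup [0, 1]$ and $[\alpha_H, 1] \cup [0, 1] \subseteq \Gamma_H$ separately, using Theorem \ref{Theorem "Main Theorem"} to write $\kappa_P^H = \prod_{p \in P} \beta_p$ with $\beta_p := 1 - 2\eta_p^H$. Note that for every non-exceptional prime, $\beta_p = 1 - 2d/(p+1)$, so $\beta_p \to 1$; this also forces $\alpha_H$ to be attained at some prime $p^{\ast}$, as the paper already observes.

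For the inclusion $\Gamma_H \subseteq [\alpha_H, 1] \cup [0, 1]$, I would split on the sign of $\kappa_P^H$. Since each $|\beta_p| \leqslant 1$, we have $|\kappa_P^H| \leqslant 1$. If $\kappa_P^H \geqslant 0$ then $\kappa_P^H \in [0, 1]$. If $\kappa_P^H < 0$, then $P$ contains an odd number of primes with $\beta_p < 0$; any such $p$ satisfies $\beta_p \in [\alpha_H, 0)$ and in particular $|\beta_p| \leqslant |\alpha_H|$. Bounding one negative factor by $|\alpha_H|$ and the remaining factors in magnitude by $1$ then gives $|\kappa_P^H| \leqslant |\alpha_H|$, so $\kappa_P^H \in [\alpha_H, 0)$.

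For the reverse inclusion I would exhibit four families of small sets whose $\kappa_P^H$ are dense in $[\alpha_H, 1] \cup [0, 1]$. First, $P = \phi$ gives the value $1$. Second, for any $t \in (0, 1)$, I select a finite set of large non-exceptional primes realising $\prod \beta_p \approx t$: since $-\log \beta_p \sim 2d/(p+1)$ while $\sum_p 1/p$ diverges, a greedy construction yields subset-sums of $-\log \beta_p$ dense in $[0, \infty)$, which approximates any $-\log t$. Third, the finite (hence small) sets $P_N = \{p \text{ prime} : p \leqslant N\}$ satisfy $\kappa_{P_N}^H \to 0$ as $N \to \infty$, because $\sum_p -\log|\beta_p|$ diverges; this places $0$ in $\Gamma_H$. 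Fourth, for $t \in [\alpha_H, 0)$ (relevant only when $\alpha_H < 0$), write $s = t/\alpha_H \in (0, 1]$, use the second step applied to primes different from $p^{\ast}$ and with $\beta_p > 0$ to find a finite $Q$ with $\prod_{p \in Q} \beta_p \approx s$, and take $P = Q \cup \{p^{\ast}\}$ to obtain $\kappa_P^H = \alpha_H \prod_{p \in Q} \beta_p \approx t$.

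The main technical obstacle is the approximation in the second step. It amounts to the elementary but essential lemma that, for any positive series $\sum a_n$ whose terms satisfy $a_n \to 0$ and whose total sum diverges, the subset-sums $\{\sum_{n \in S} a_n : S \text{ finite}\}$ are dense in $[0, \infty)$ — applied to $a_p := -\log \beta_p$ restricted to large non-exceptional primes. Once this is in place, the four constructions combine with the upper bound to yield the claimed description of $\Gamma_H$.
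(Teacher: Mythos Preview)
Your proposal is correct and follows the same two-inclusion strategy as the paper, using the factorisation $\kappa_P^H=\prod_{p\in P}\beta_p$ from Theorem~\ref{Theorem "Main Theorem"} for both directions and obtaining negative values by adjoining the prime $p^\ast$ realising $\alpha_H$.

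The one substantive difference is how values in $(0,1)$ are hit. The paper isolates this as Lemma~\ref{Lemma "Spectrum"}: for each $\alpha\in(0,1)$ it builds an \emph{infinite} small set $P$ of large primes with $\kappa_P^H=\alpha$ \emph{exactly}, via a recursive construction that at each stage chooses a finite block $P_i$ with $\kappa_{P_i}^H$ between $\alpha_{i-1}$ and $\tfrac{1+\alpha_{i-1}}{2}$. You instead appeal to the elementary subset-sum density lemma (terms $a_p=-\log\beta_p\to 0$ with $\sum a_p=\infty$ give finite subset-sums dense in $[0,\infty)$) and use \emph{finite} sets achieving only approximate values. Since $\Gamma_H$ is defined as a closure, approximations suffice, so your route is equally valid and arguably a touch more elementary; it also makes your third step (using $P_N=\{p\leqslant N\}$ to place $0$ in $\Gamma_H$) redundant, as the second step already produces values arbitrarily close to $0$. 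Conversely, the paper's Lemma~\ref{Lemma "Spectrum"} gives the slightly stronger statement that every point of $(0,1)$ is actually attained, not merely accumulated. Your upper-bound argument (pulling out one negative factor and bounding the rest by $1$) is the same idea as the paper's final paragraph, stated a little more cleanly.
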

	
	Before we move on to the proof of Proposition \ref{Proposition "Spectrum"}, we require the following lemma. We give the proof for the sake of completeness. 
	
	\begin{lemma}\label{Lemma "Spectrum"}
		For any given $H$, $\alpha\in(0,1)$ and a parameter $X$, there exists a small set of primes $P$, with each prime larger than $X$ such that $\kappa_P^H = \alpha$.
	\end{lemma}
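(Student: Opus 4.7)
The plan is to exploit the Euler-product formula of Theorem \ref{Theorem "Main Theorem"} in the regime where every prime used is non-exceptional. Enlarging $X$ if necessary, we may assume $X \geq 2d$ and that every prime $p > X$ is non-exceptional (possible since only finitely many primes divide elements of $\hat{H}$). Then $1 - 2\eta_p^H = 1 - 2d/(p+1) \in (0,1)$ for all candidate primes, and the task becomes that of realizing $\alpha$ as an infinite product $\prod_{p \in P}(1 - 2d/(p+1))$ with $P$ a small set of primes above $X$.

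Construct $P = \{p_1 < p_2 < \cdots\}$ by a greedy procedure: set $\pi_0 := 1$, choose $p_1$ as the smallest prime $> X$ with $\pi_0(1 - 2d/(p_1+1)) \geq \alpha$, and inductively let $p_i$ be the smallest prime $> p_{i-1}$ with $\pi_i := \pi_{i-1}(1 - 2d/(p_i+1)) \geq \alpha$. When $\pi_{i-1} > \alpha$ this is equivalent to $p_i + 1 \geq T_i := 2d\pi_{i-1}/(\pi_{i-1}-\alpha)$, which is met by any sufficiently large prime; if $\pi_{i-1} = \alpha$ exactly we terminate with a finite $P$ and are done. Otherwise $\{\pi_i\}$ decreases monotonically in $[\alpha, 1]$ to a limit $\pi_\infty \geq \alpha$. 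The crux is to show $\pi_\infty = \alpha$: suppose instead that $\epsilon := \pi_\infty - \alpha > 0$. Since $\pi_{i-1} \geq \pi_\infty$ we have $T_i \leq 2d/\epsilon$ uniformly; once $p_{i-1} > 2d/\epsilon$ the constraint $p_i + 1 \geq T_i$ is automatically satisfied for any prime $p_i > p_{i-1}$, so the greedy degenerates to selecting the immediate next prime at each step. Consequently $P$ would contain every prime beyond some bound, giving $\sum_{p \in P}1/p = \infty$ and hence $\prod_{p \in P}(1 - 2d/(p+1)) = 0$, contradicting $\pi_\infty > 0$.

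Therefore $\pi_\infty = \alpha$, and since $\alpha > 0$ the standard theory of infinite products yields $\sum_i 1/(p_i+1) < \infty$, so $P$ is a small set of primes greater than $X$. Theorem \ref{Theorem "Main Theorem"} then gives $\kappa_P^H = \alpha$, completing the construction. The main obstacle is the ``trapping'' step ruling out $\pi_\infty > \alpha$; the key observation making it work is that a positive gap $\pi_\infty - \alpha$ would force the greedy to swallow essentially every prime past some point, directly violating the smallness of $P$. Remarkably, no quantitative prime-gap input beyond the divergence of $\sum_p 1/p$ is required.
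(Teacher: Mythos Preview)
Your proof is correct. Both your argument and the paper's build $P$ iteratively from non-exceptional primes above $X$, keeping all partial products at least $\alpha$, and both ultimately rest on the divergence of $\sum_p 1/p$; the smallness of $P$ then follows for free from $\alpha>0$ via the standard product/series correspondence.

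The execution differs. The paper adjoins finite \emph{blocks} of primes $P_1,P_2,\ldots$, choosing each block so that the running target $\alpha_i=\alpha/\prod_{j\le i}\kappa_{P_j}^H$ satisfies $\tfrac{1+\alpha_i}{2}>\kappa_{P_{i+1}}^H>\alpha_i$; this bisection-type bound forces the partial products to converge to $\alpha$ directly by monotone convergence and a squeeze. You instead pick one prime at a time greedily (the smallest admissible prime), and then rule out $\pi_\infty>\alpha$ by contradiction: a positive gap would make the threshold $T_i$ uniformly bounded, forcing the greedy to absorb every prime past some point and hence drive the product to $0$. Your contradiction step is clean and, as you note, uses nothing about prime gaps beyond the divergence of $\sum_p 1/p$; the paper's block approach trades that contradiction for an explicit halving bound. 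Either route gives the lemma with the same inputs.
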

	
	\begin{proof}
		We shall choose $X$ large enough without loss of generality and suppose that every prime $p\geqslant X$ is a non-exceptional prime. Choose a finite set $P_1$ such that $\frac{1+\alpha}{2} > \kappa_{P_1}^H > \alpha$. Such a choice is possible because the sum of reciprocals of primes is divergent. For simplicity, suppose $\alpha_1 = \alpha/\kappa_{P_1}^H$. Choose $P_2$ disjoint from $P_1$ such that $\frac{1 + \alpha_1}{2} > \kappa_{P_2}^H > \alpha_1$. Define $\alpha_2 : = \alpha_1/\kappa_{P_2}^H$. In the next step, we choose $P_3$ disjoint from $P_1\cup P_2$ and so on. Constructing $\alpha_i$'s recursively as above, give for each $i$
		\[
		\frac{1 + \alpha_i}{2} > \kappa_{P{i+1}}^H > \alpha_i.
		\]
		Multiplying out by $\prod_{j=1}^{i} \kappa_{P_j}^H$ gives us
		\begin{equation}\label{Equation "inequality"}
			\frac{\prod_{j=1}^{i} \kappa_{P_j}^H + \alpha}{2} > \prod_{j=1}^{i+1} \kappa_{P_j}^H > \alpha.
		\end{equation}
		From the monotone convergence theorem, $\prod_{j=1}^{i+1} \kappa_{P_j}^H $ is convergent. But then \eqref{Equation "inequality"} forces the limit to be $\alpha$. Now we choose $P$ to be $\cup_{i=1}^{\infty} P_i$. The fact that $\kappa_P^H=\alpha$ follows from Theorem \ref{Theorem "Main Theorem"} and the fact $P$ is a small set follows from the fact that $\alpha > 0$. 
	\end{proof}
	
	\begin{proof}[Proof of Proposition \ref{Proposition "Spectrum"}] Choose $X$ so that for primes $p > X$, $\eta_p^H > \alpha_H$. If $\alpha_H \geqslant 0$, then the proposition is clear from Lemma \ref{Lemma "Spectrum"}. Otherwise choose $\beta\in [\alpha,1]$, we shall show that there exists a $P$ such that $\kappa_P^H = \beta$. From Lemma \ref{Lemma "Spectrum"}, we may suppose that $\beta\in (\alpha_H,0)$. Choose $\alpha : = \beta/\alpha_H$, and choose $P$ a small set of non-exceptional primes, each larger than $X$, such that $\kappa_P^H = 1-2\alpha$. Let $p$ be such that $\alpha_H = 1-2\eta_p^H$. Then from Theorem \ref{Theorem "Main Theorem"}, the required small set of primes is $P\cup \{p\}$. Thus $[\alpha_H,1]\cup [0,1] \subseteq \Gamma_H$. 
		
	Suppose $\alpha_H > 0$. Suppose $x\in \Gamma_H$, it suffices to show that $x \geqslant 0$. This is clear from Theorem \ref{Theorem "Main Theorem"} since in this case $(1-2\eta_p^H) > 0$ for every prime $p$, so that $\Gamma_H \subset [0,1]$.
	
	Suppose $\alpha_H < 0$. For every small set $P$, it then suffices to show that $\kappa_P^H \geqslant \alpha_H$. Now observe that $|\kappa_P^H| \leqslant 1$ for every subset $P$ of the primes. Suppose $1-2\eta_p^H = \alpha_H$ for some fixed prime $p$. For any small set $P$, we have that $\kappa_P^H \geqslant \kappa_{P\setminus \{p\}}^H \alpha_H \geqslant \alpha_H$.
	\end{proof}

	\bibliographystyle{plain}
	
	\bibliography{Bibliography}

\end{document}